\newtheorem{theorem}{Theorem}
\newtheorem{corollary}{Corollary}
\newtheorem{lemma}{Lemma}
\newtheorem{proposition}{Proposition}
\newtheorem{definition}{Definition}
\newtheorem{remark}{Remark}
\newtheorem{example}{Example} }
\newcommand{\Der}{\mathrm{Der}}
\newcommand{\Aut}{\mathrm{Aut}}
\newcommand{\Ann}{\mathrm{Ann}}
\newenvironment{proof}
  {\begin{trivlist}
  \item[\textit{\noindent\textsc{ Proof.}}]}
  {{\bf q.e.d.}\end{trivlist}}
\begin{document}

\title{Affine Structures on Jet and Weil Bundles}
\author{David Bl\'azquez-Sanz}
%\date{16 April, 2008}

\maketitle

\begin{abstract}
Weil algebra morphism induce natural transformations between Weil bundles. In some
well known cases, a natural transformation is endowed with a canonical structure of affine
bundle. We show that this structure arises only when the Weil algebra morphism is surjective
and its kernel has null square. Moreover, in some cases, this structure of affine bundle is
passed down to Jet spaces. We give a characterization of this fact in algebraic terms. This
algebraic condition also determines an affine structure between the groups of automorphisms of
related Weil algebras. 
\end{abstract}

\begin{center}

Mathematical Subject Classification 58A20, 58A32 \\ \medskip
{\bf Key Words: } \\ \medskip
Jet, Weil Bundles, Affine Structure, Natural Transformations.

\end{center}

\medskip

\section*{Introduction}

The theory of Weil bundles and Jet spaces is developed in order to understand
the geometry of PDE systems. C. Ehresmann formalized contact elements of S. Lie,
introducing the spaces of jets of sections; simultaneously A. Weil showed in \cite{Weil1953}
that the theory of S. Lie could be formalized easily by replacing the spaces of contact elements
by the more formal spaces of \emph{``points proches''}, known as Weil bundles. The general
theory of jet spaces \cite{Munoz2000} recovers the classical spaces of contact elements $J^l_mM$
of S. Lie from the ideas and metodology of A. Weil. 

In the theory of Weil bundles, morphisms $A\to B$ of Weil algebras induce \emph{natural transformations}
\cite{Kolar1993} between Weil bundles. There are well known cases in which these natural transformations
are affine bundles that often appear in differential geometry \cite{Kolar1993}. In \cite{Kolar2000} I. Kol\'ar
showed that this is the behaviour of $M^{A_l}\to M^{A_{l-1}}$. In this paper \emph{we characterize
natural transformations that are affine bundles.} It is done easily by adopting a different point of view
of the tangent space of $M^A$ as done in \cite{Munoz2000}. Our result is as follows: \emph{there is
a canonical affine structure for natural transformations $M^A\to M^B$ induced by a surjective 
morphisms $A\to B$ whose kernel has null square.}
This is true for $M^{A_l}\to M^{A_k}$ with $2k+1\geq l > k \geq 0$.

In some cases the natural transformations induce maps between Jet spaces. This situation holds
in the cases studied in \cite{Kolar2000}. We characterize this situation, and moreover, 
\emph{we will determine when an affine structure on the Weil bundle morphism is passed down to
the Jet space morphism}. In addition to that, we will prove that in this case there also exist
an affine structure in the morphism between the groups of automorphisms of the Weil algebras. This is true for
spaces $J^l_mM \to J^k_m$ with $l>k>0$ and $3k+1\geq 2l$.

\subsection*{Acknowledgements}

I acknowledge Prof. J. Mu\~noz and R. J. Alonso-Blanco for their help in this work. 
I am grateful to J. Rodr\'iguez and S. Jim\'enez for their suggestions and helpful
discussions on the topic of Weil bundles. I would like also to thank professor C. Sim\'o
who hosted me in Barcelona University, and J. J. Morales-Ruiz for their support. 

\subsection*{Notation and Conventions}

All manifolds and maps are assumed to be infinitely differentiable. All results
involving a manifold $M$ assume that it is not empty and all results involving 
jet spaces $J^AM$ assume that the jet space $J^AM$ is also not empty (in such case,
maybe the algebraic conditions for the existence of affine structure are satisfied,
but no structure exists). 

\section{Weil Bundles}

By a \emph{Weil algebra} we mean a finite dimensional, local, commutative $\mathbb R$-algebra
with unit. If $A$ is a Weil algebra, let us denote by $\mathfrak m_A$ its maximal ideal. If $A$ and 
$B$ are Weil algebras, by a morphism $A\to B$ we mean a $\mathbb R$-algebra morphism.

\begin{example}
  Let $\mathbb R[[\xi_1,\ldots,\xi_m]]$ be the ring of formal series with real coefficients and
free variables $\xi_1,\ldots,\xi_m$. Let $\mathfrak m$ be the maximal ideal spanned by $\xi_1,\ldots,
\xi_m$. Then, for any non-negative integer $l$, the ring
$$\mathbb R^l_m = \mathbb R[[\xi_1,\ldots,\xi_m]]/\mathfrak m^{l+1}$$
is a Weil algebra.
\end{example}

For every Weil algebra $A$, there is a non-negative integer $l$ such that $\mathfrak m_A\neq 0$ but
$\mathfrak m_A=0$; we say that $l$ is the \emph{height} of $A$. The \emph{width} of $A$ is the dimension
of the vector space $\mathfrak m_A/\mathfrak m_A^2$. Thus, $\mathbb R^l_m$ is a Weil algebra of
height $l$ and width $m$. If $A$ is of height $l$ and width $m$ there exists a surjective morphism $\mathbb R^l_m\to A$ 
(see \cite{Kolar2000}, \cite{Munoz2000}).
 
\begin{definition}
  Let $M$ be a smooth manifold and $A$ a Weil algebra. The set $M^A$ of the $\mathbb R$-algebra morphisms
$$p^A\colon C^\infty(M)\to A,$$
is the so-called space of near-points of type $A$ of $M$, also called $A$-points of $M$.
\end{definition}

Let us consider a basis $\{a_k\}$ of $A$. For each $f\in C^\infty(M)$ we define real valued
functions $\{f_k\}$ on $M^A$ by setting:
$$p^A(f) = \sum_k f_k(p^A)a_k.$$
We say that the $\{f_k\}$ are the real components of $f$ relative to the basis
$\{a_k\}$. 

\begin{theorem}[\cite{Munoz2000}]
The space $M^A$ is endowed with a unique structure of a smooth manifold such that the real
components of smooth functions on $M$ are smooth functions on $M^A$. 
\end{theorem}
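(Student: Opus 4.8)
The plan is to construct the smooth structure by gluing explicit local models built from charts on $M$, and then to read off uniqueness from the defining property. First I would record two standard facts about a morphism $p^A\colon C^\infty(M)\to A$. Since $A=\mathbb R\oplus\mathfrak m_A$ with $\mathfrak m_A$ nilpotent, composing $p^A$ with the projection $A\to\mathbb R$ yields an $\mathbb R$-algebra morphism $C^\infty(M)\to\mathbb R$, which is evaluation at a well-defined point $x\in M$; this gives a projection $\pi\colon M^A\to M$, $p^A\mapsto x$. Moreover each such morphism is local, in the sense that $p^A(f)$ depends only on the germ of $f$ at $x$, so that $p^A$ carries the ideal $\mathfrak m_x$ of functions vanishing at $x$ into $\mathfrak m_A$.

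Next I would produce the local model. Fix a chart $U\cong\mathbb R^n$ with coordinates $x^1,\dots,x^n$ and let $l$ be the height of $A$, so $\mathfrak m_A^{\,l+1}=0$. For $p^A\in\pi^{-1}(U)$ lying over $x$, set $v_i=p^A(x^i)-x^i\in\mathfrak m_A$. Using Taylor's formula with remainder in $\mathfrak m_x^{\,l+1}$ together with $p^A(\mathfrak m_x^{\,l+1})\subseteq\mathfrak m_A^{\,l+1}=0$, one obtains the closed expression
$$p^A(f)=\sum_{|\alpha|\le l}\frac{1}{\alpha!}\,\partial_\alpha f(x)\,v^{\alpha},\qquad v^{\alpha}=v_1^{\alpha_1}\cdots v_n^{\alpha_n}.$$
This shows that $p^A$ is completely determined by $(x,v_1,\dots,v_n)$, and conversely that any choice of $x\in U$ and $v_1,\dots,v_n\in\mathfrak m_A$ defines, through the same formula, an honest $\mathbb R$-algebra morphism (the remainder always annihilates against $\mathfrak m_A^{\,l+1}=0$). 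I thus obtain a bijection $\pi^{-1}(U)\cong U\times\mathfrak m_A^{\,n}$, which I adopt as a chart; its coordinate functions are precisely the real components of $x^1,\dots,x^n$.

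Then I would verify smoothness of the transition maps. On the overlap with a second chart $(\tilde x^j)$ the new data are $\tilde x^j=\tilde x^j(x)$ and $\tilde v_j=p^A(\tilde x^j)-\tilde x^j(x)$; expanding the latter by the displayed formula exhibits $\tilde v_j$ as a polynomial in $v_1,\dots,v_n$ whose coefficients are smooth functions of $x$, so the change of charts is smooth and the charts form an atlas. For uniqueness, note that the coordinate functions of this atlas are real components of smooth functions on $M$; hence any smooth structure satisfying the stated property is forced to make them smooth, so it admits these very charts as admissible charts. Since the charts cover $M^A$, the structure they generate is unique, and the constructed structure already makes all real components smooth by the displayed formula (globalizing from coordinate functions to arbitrary $f$ by a partition-of-unity argument). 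The two structures therefore coincide.

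The main obstacle is the local model itself, and in particular the two facts isolated in the first paragraph: that the real point underlying $p^A$ is genuine evaluation, and that $p^A$ is determined by germs at that point. Both rest on the characterization of $\mathbb R$-algebra morphisms $C^\infty(M)\to\mathbb R$ as point evaluations, together with a locality (support) argument using bump functions, and it is exactly here that the hypothesis that $M$ is a manifold is essential. Once locality is established, the Taylor expansion and the verification that every datum yields a well-defined morphism are routine consequences of the nilpotency $\mathfrak m_A^{\,l+1}=0$.
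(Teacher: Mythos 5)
The paper offers no proof of this theorem: it is imported verbatim from \cite{Munoz2000}, so the comparison is with the standard construction given there. Your existence argument follows that construction essentially step by step: the character $A\to\mathbb R$ producing the projection $\pi\colon M^A\to M$, locality of $p^A$ via bump functions together with nilpotency of $\mathfrak m_A$ (if $f$ vanishes near $x$, pick $h$ with $h(x)=0$ and $h\equiv 1$ near $\mathrm{supp}\,f$, so $f=h^{l+1}f$ and $p^A(f)=p^A(h)^{l+1}p^A(f)=0$), the Taylor-formula local model giving the bijection $\pi^{-1}(U)\cong U\times\mathfrak m_A^{\,n}$, and the polynomial transition maps. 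That part is sound and is the same route as the cited source (and as \cite{Kolar1993}).

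The genuine gap is in your uniqueness step. From the hypothesis that a competing structure $\mathcal S$ makes all real components smooth, you may conclude only that each chart map $\pi^{-1}(U)\to U\times\mathfrak m_A^{\,n}$ is an $\mathcal S$-smooth bijection; you cannot conclude that it is an \emph{admissible chart} of $\mathcal S$, because a smooth bijection need not be a diffeomorphism (consider $t\mapsto t^3$ on $\mathbb R$). Indeed, read literally, the property ``all real components are $\mathcal S$-smooth'' does not by itself single out a structure: take $M=\mathbb R$ and $A=\mathbb R$, so $M^A=\mathbb R$ and the real component of $f$ is $f$ itself; the exotic structure on $\mathbb R$ with single chart $t\mapsto t^{1/3}$ makes every standard smooth $f$ smooth (since $f(s^3)$ is smooth in $s$), yet it differs from the standard structure. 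The uniqueness must therefore be routed through the stronger, \emph{initial} characterization, which your explicit charts do deliver: for any manifold $N$, a map $g\colon N\to M^A$ is smooth if and only if all composites $f_k\circ g$ with real components are smooth (locally, the chart components of $g$ are --- up to bump-function modifications extending local coordinates of $M$ to global functions --- among the $f_k\circ g$). Two structures enjoying this initial property must coincide, as one sees by testing the identity map in both directions; your final step should be replaced by this argument, and one should also record the routine verification that the constructed topology is Hausdorff and second countable.
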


\begin{example} It is well known that each morphism $C^\infty(M)\to \mathbb R$ is a point
of $M$. Since the real components on $M^{\mathbb R}$ of smooth functions coincide with the
functions themselves we know that $M^{\mathbb R} = M$.
\end{example}

\begin{example}
For each Weil algebra $\mathbb R^l:m$ let us denote by $M^l_m$ the space of near points
of type $\mathbb R^l_m$. Then $M^1_1$ is the tangent bundle $TM$. In general $M^l_m$ is the space
of germs at the origin of smooth maps $\mathbb R^m\to M$ up to order $l$ (see \cite{Weil1953}). 
\end{example}

A Weil algebra morphism $\phi\colon A\to B$ induces by composition a smooth map 
$\hat\phi\colon M^A \to M^B$ \cite{Kolar1993, Munoz2000} which is called a \emph{natural
transformation,} $\hat\phi(p^A) = \phi\circ p^A$. 

\begin{example}
Let us notice that a Weil algebra is provided with a unique morphism $A\to\mathbb R$. It induces
a canonical map $M^A\to M$ which is a fibre bundle. This bundle is the so-called \emph{Weil bundle}
of type $A$ over $M$. Let $p^A$ be in $M^A$ and $p$ its projection in $M$. Then we will say that
$p^A$ is an $A$-point near $p$. For each smooth function $f$ the value $p^A(f)$ depends only on the germ
at $p$ of $f$.
\end{example}

A smooth map $f\colon M\to N$ of smooth manifolds induces by composition a $\mathbb R$-algebra morphism
$$f^*\colon C^\infty(N)\to C^\infty(M), \quad f^*(g) = g\circ f.$$
We can conpose this morphism $f^*$ with $A$-points of $M$ obtaining $A$-points of $N$. Then, Weil 
algebra morphisms and smooth maps transform near-points by composition, and this implies a functorial
behaviour of Weil bundles with respect to those transformations.

\medskip
 
We can formalize this situation in the following way. Let us consider $\mathcal M$ the category of smooth
manifolds, and $\mathcal W$ the category of Weil algebras. It the direct product category $\mathcal M\times
\mathcal W$, objects are pairs $(M,A)$ and morphisms are pairs $(f,\phi)$. We define $w(M,A) = M^A$. Thus, the
natural way of defining the natural image of the morphism $(f,\phi)$,
$$f\colon M\to N, \quad\quad \phi\colon A\to B,$$
is
$$w(f,\phi)\colon M^A\to N^B, \quad p^A\mapsto \phi\circ p^A\circ f^*.$$
The following result follows easily:

\begin{proposition}\label{Proposition1}
The assignment
$$w\colon \mathcal M\times\mathcal W \leadsto \mathcal M, \quad \quad (M,A)\leadsto M^A,$$
is a covariant functor.
\end{proposition}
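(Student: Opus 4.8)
The plan is to verify that $w$ satisfies the two defining axioms of a covariant functor: preservation of identities and compatibility with composition. Since $w$ sends an object $(M,A)$ to the set (in fact manifold) $M^A$ of algebra morphisms $p^A\colon C^\infty(M)\to A$, and sends a morphism $(f,\phi)$ to the map $p^A\mapsto \phi\circ p^A\circ f^*$, everything reduces to unwinding these composite expressions and checking that they behave well under composition of morphisms in $\mathcal M\times\mathcal W$.

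First I would check the identity axiom. The identity morphism of $(M,A)$ in the product category is the pair $(\mathrm{id}_M,\mathrm{id}_A)$. Since $\mathrm{id}_M^*=\mathrm{id}_{C^\infty(M)}$ and $\mathrm{id}_A$ acts trivially, the induced map sends $p^A\mapsto \mathrm{id}_A\circ p^A\circ \mathrm{id}_{C^\infty(M)}=p^A$, so $w(\mathrm{id}_M,\mathrm{id}_A)=\mathrm{id}_{M^A}$, as required.

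Next I would verify compatibility with composition. Take composable morphisms $(f,\phi)\colon (M,A)\to (N,B)$ and $(g,\psi)\colon (N,B)\to (P,C)$, so that $g\colon N\to P$, $\psi\colon B\to C$, and the composite in $\mathcal M\times\mathcal W$ is $(g\circ f,\psi\circ\phi)$. The key computational point is the contravariance of pullback, namely $(g\circ f)^*=f^*\circ g^*$, which must be combined with the covariance of the algebra maps $\phi,\psi$. Applying $w(g,\psi)\circ w(f,\phi)$ to an $A$-point $p^A$ gives $\psi\circ(\phi\circ p^A\circ f^*)\circ g^*=(\psi\circ\phi)\circ p^A\circ (f^*\circ g^*)=(\psi\circ\phi)\circ p^A\circ (g\circ f)^*$, which is exactly $w(g\circ f,\psi\circ\phi)(p^A)$ by definition. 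Hence the two assignments agree.

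The only subtlety worth flagging is well-definedness at the level of manifolds: one should note that $\phi\circ p^A\circ f^*$ is again an $\mathbb R$-algebra morphism $C^\infty(N)\to B$, hence a genuine element of $N^B$, and that the resulting map $M^A\to N^B$ is smooth with respect to the manifold structures supplied by the structure theorem of \cite{Munoz2000}. Smoothness follows because the real components of the image are expressed, through the fixed bases of $A$ and $B$ and the linear map $\phi$, as smooth functions of the real components of $p^A$. Everything else is the purely formal associativity-of-composition computation above, so I anticipate no real obstacle; the proposition is genuinely a routine unwinding of definitions, which is presumably why the paper calls it ``easy.''
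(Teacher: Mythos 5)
Your proof is correct, and it is exactly the routine verification the paper has in mind: the paper offers no proof at all, stating only that the result ``follows easily,'' so your write-up simply supplies the omitted details. The two things actually worth checking --- that the contravariance $(g\circ f)^*=f^*\circ g^*$ of pullbacks cancels correctly against the covariance of $\psi\circ\phi$ in the composition axiom, and that $w(f,\phi)$ is smooth because the real components of $\phi\circ p^A\circ f^*$ are linear combinations (via $\phi$ in the chosen bases) of the real components of $f^*$-pullbacks, hence smooth by the structure theorem --- are precisely the two you flag, so there is nothing to add.
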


\begin{remark}\label{Remark1}
There are two remarkable cases of induced maps:
\begin{itemize}
\item If $X\subset M$ is an embedding then for each Weil algebra $A$ the induced map $X^A\to M^A$ is
also an embedding.
\item If $A\to B$ is a surjective morphism then for all $M$ the induced natural transformation
$M^A\to M^B$ is a fibre bundle. 
\end{itemize}
\end{remark}

\begin{example}
Let $A$ be a of height $l$. For each $k\leq l$ let us define $A_k = A/\mathfrak m_A^{k+1}$. Then $A_k$ is 
a Weil algebra of height $k$ and $A_l= A$. For $k\geq r$ we have a natural projection $M^{A_k}\to M^{A_r}$
which is a bundle. In particular we have canonical bundles $M^k_m\to M^r_m$.
\end{example}

\subsection{Tangent Structure}

Given $p^A\in M^A$, let us denote by $\Der_{p^A}(C^\infty(M),A)$ the space of \emph{derivations} of the
ring $C^\infty(M)$ into the module $A$, where the structure of $C^\infty(M)$-module of $A$ is induced
by the point $p^A$ itself. By derivations we mean $\mathbb R$-linear maps $\delta\colon C^\infty(M)\to A$
satisfying Leibniz's formula:
\begin{equation}\label{Leibniz}
\delta(f\cdot g) = p^A(f)\cdot \delta(g) + p^A(g)\cdot\delta(f).
\end{equation}

If $D$ is a tangent vector to $M^A$ at $p^A$ then it defines a derivation give by:
$$\delta(f) = \sum_k (Df_k)a_k,$$
and it is easy to prove that the spaces $\Der_{p^A}(C^\infty(M),A)$ and $T_{p^A}(M^A)$ are
identified in this way \cite{Munoz2000}. Fron now on we will assume this identification. It 
applies not just to the vector spaces, but it is also compatible 
with Proposition \ref{Proposition1}. The following
thorem resume some results in \cite{Munoz2000}.

\begin{theorem}[Mu\~noz, Rodr\'iguez, Muriel \cite{Munoz2000}]\label{Theorem2}
Let us consider a smooth map $f\colon M\to N$, a Weil algebra morphism $\phi\colon A\to B$, and
the induced smooth map:
$$w(f,\phi)\colon M^A\to N^B, \quad p^A\to q^B = \phi\circ p^A\circ f^*.$$
Then, the linearized map $w(f,\phi)'\colon T_{p^A}(M^A)\to T_{q^B}(N^B)$ coincides (under the identification
assumed above) with the map:
$$\Der_{p^A}(C^\infty(M),A)\to \Der_{q^B}(C^\infty(N),B), \quad\quad \delta\mapsto \phi\circ \delta \circ f^*.$$
\end{theorem}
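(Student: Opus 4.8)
The plan is to reduce the statement to a direct computation in the real components that define the smooth structures of $M^A$ and $N^B$. First I would fix a basis $\{a_k\}$ of $A$ and a basis $\{b_j\}$ of $B$, and write $\phi(a_k) = \sum_j \phi_{jk} b_j$ with scalars $\phi_{jk}\in\mathbb R$. For $g\in C^\infty(N)$ with real components $\{g_j\}$ on $N^B$, I would compute the pullback functions $g_j\circ w(f,\phi)$ on $M^A$. Applying the definition $w(f,\phi)(p^A)=\phi\circ p^A\circ f^*$, expanding $p^A(f^*g)=p^A(g\circ f)=\sum_k (g\circ f)_k(p^A)\,a_k$, and pushing through $\phi$ yields the key identity $g_j\circ w(f,\phi)=\sum_k \phi_{jk}\,(g\circ f)_k$ as smooth functions on $M^A$. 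This is the only place where the specific structure of the natural transformation enters, and it is the heart of the argument.

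With this identity in hand, the rest is routine. Given a tangent vector $D\in T_{p^A}(M^A)$ with associated derivation $\delta(h)=\sum_k (Dh_k)a_k$, the pushed-forward vector $w(f,\phi)'(D)$ acts on functions on $N^B$ by pullback, so its associated derivation $\delta'$ satisfies $\delta'(g)=\sum_j \big(D(g_j\circ w(f,\phi))\big)b_j$. Substituting the key identity and using $\mathbb R$-linearity of $D$ gives $\delta'(g)=\sum_j\sum_k \phi_{jk}\,D((g\circ f)_k)\,b_j$. On the other hand, $(\phi\circ\delta\circ f^*)(g)=\phi\big(\sum_k D((g\circ f)_k)a_k\big)=\sum_k D((g\circ f)_k)\,\phi(a_k)=\sum_j\sum_k \phi_{jk}\,D((g\circ f)_k)\,b_j$, which is exactly $\delta'(g)$. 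This proves $w(f,\phi)'(D)=\phi\circ\delta\circ f^*$ and establishes the theorem.

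Conceptually, I find it clarifying to verify the statement through the equivalent ``curve'' picture, which explains why the formula must take this form. Every derivation $\delta\in\Der_{p^A}(C^\infty(M),A)$ arises as $\delta=\frac{d}{dt}\big|_{t=0}\,p^A_t$ for a smooth family of $A$-points $p^A_t$, since differentiating the morphism identity $p^A_t(hg)=p^A_t(h)\,p^A_t(g)$ recovers Leibniz's rule \eqref{Leibniz}. Because $w(f,\phi)(p^A_t)=\phi\circ p^A_t\circ f^*$ and both $\phi$ and $f^*$ are fixed $\mathbb R$-linear maps independent of $t$, differentiation commutes with pre- and post-composition, giving $w(f,\phi)'(\delta)=\phi\circ\delta\circ f^*$ at once.

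I expect the main obstacle to be purely bookkeeping: making the identification $T_{p^A}(M^A)\cong\Der_{p^A}(C^\infty(M),A)$ fully explicit and checking that it is compatible with how $w(f,\phi)'$ acts by pullback on the real components. The scalars $\phi_{jk}$ and the two distinct bases must be tracked consistently, so that the change of basis implicit in $\phi$ matches the abstract composition $\phi\circ\delta\circ f^*$; once the key identity $g_j\circ w(f,\phi)=\sum_k \phi_{jk}\,(g\circ f)_k$ is correctly established, no further difficulty remains.
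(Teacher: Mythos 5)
Your computation is correct. Note first that the paper itself offers no proof of this theorem: it is quoted from \cite{Munoz2000} (``the following theorem resume some results in \cite{Munoz2000}''), so there is no internal argument to compare against; your proposal supplies the missing proof, and it is the natural one. The key identity $g_j\circ w(f,\phi)=\sum_k \phi_{jk}\,(g\circ f)_k$ is exactly right: writing $\phi(p^A(g\circ f))=\sum_k (g\circ f)_k(p^A)\,\phi(a_k)$ and expanding $\phi(a_k)=\sum_j\phi_{jk}b_j$, the coefficients of the basis $\{b_j\}$ match uniquely, and from there the two expressions for $\delta'(g)$ and $(\phi\circ\delta\circ f^*)(g)$ coincide term by term. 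You tacitly use two facts the paper grants you: that $w(f,\phi)$ is smooth (so that $w(f,\phi)'$ exists and acts on tangent vectors by $(w(f,\phi)'D)(F)=D(F\circ w(f,\phi))$), and that the identification $T_{p^A}(M^A)\simeq\Der_{p^A}(C^\infty(M),A)$ is the one given by real components, which you apply consistently at both ends. Your ``curve'' paragraph is a sound consistency check rather than an independent proof as written --- the claim that every derivation arises as $\frac{d}{dt}\big|_{t=0}p^A_t$ uses surjectivity of the tangent-to-derivation identification together with the realization of tangent vectors by curves, both of which are already part of the identification you are verifying --- but it does explain conceptually why the answer must be pre- and post-composition by the fixed linear maps $f^*$ and $\phi$. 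No gaps; the bookkeeping you flagged as the main obstacle is handled correctly.
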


\subsection{Affine Structure}

In this section we will analyze the structure of the fibre bundle induced
by a surjective morphism $A\to B$ which has been introduced in Remark \ref{Remark1}.
In some specific cases it has been proved that those bundles are endowed with a canonical
structure of affine bundles. We will see that this structure has its foundation in the 
algebraic construction of the \emph{spaces of near-points}. Indeed, it is an easy task
to give an algebraic characterization of this fact. A morphism will induce an affine
structure if and only if its kernel ideal has null square.

The key point is to consider both, \emph{near-points} and \emph{tangent vectors} to $M^A$
as $\mathbb R$-linear maps from $C^\infty(M)$ to a Weil algebra. Thus, they are provided
with the addition law of $\mathbb R$-linear maps. Under some adequate assumptions we will obtain a new
near-point when we add a derivation to a near-point.

\begin{lemma}\label{Lemma1}
Let us consider $p^A\in M^A$ and $D\in T_{p^A}(M^A)$. The sum $p^A+D$ is
an $A$-point of $M$ if an only if $(\mathrm{Im}(D))^2=0$. 
\end{lemma}

\begin{proof}
Let us define $\tau = p^A + D$. Then, for all $f,g\in C^\infty(M)$, 
$$\tau(f\cdot g) = \tau(f)\cdot \tau(g) - D(f)\cdot D(g);$$
since it is a $\mathbb R$-linear map, it is an algebraic morphism if an only if
for any pair $f$ and $g$ of smooth functions in $M$ we have $D(f)\cdot D(g) = 0$. 
\end{proof}

\begin{lemma}\label{Lemma2}
Let us consider $p^A$ and $q^A$ in $M^A$. The difference $\delta = q^A - p^A$ is
a derivation and belongs to $T_{p^A}(M^A)$ if and only if $(\mathrm{Im}(\delta))^2 = 0$. 
\end{lemma}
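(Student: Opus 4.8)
The plan is to run essentially the same computation that underlies Lemma \ref{Lemma1}, but read in the opposite direction: there we started with a near-point and a derivation and asked when their sum was again a near-point; here we start with two near-points and ask when their difference is a derivation. Since both $p^A$ and $q^A$ are $\mathbb R$-algebra morphisms, for any $f,g\in C^\infty(M)$ I would first compute
$$\delta(f\cdot g) = q^A(f\cdot g) - p^A(f\cdot g) = q^A(f)\cdot q^A(g) - p^A(f)\cdot p^A(g),$$
using multiplicativity of each near-point separately.

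Next I would substitute $q^A = p^A + \delta$ into the term $q^A(f)\cdot q^A(g)$ and expand the product bilinearly. Three of the four resulting terms reassemble into the Leibniz expression, leaving
$$\delta(f\cdot g) = p^A(f)\cdot\delta(g) + p^A(g)\cdot\delta(f) + \delta(f)\cdot\delta(g).$$
Comparing with Leibniz's formula \eqref{Leibniz} for the module structure induced by $p^A$, the map $\delta$ is a derivation in $\Der_{p^A}(C^\infty(M),A)=T_{p^A}(M^A)$ precisely when the remainder term $\delta(f)\cdot\delta(g)$ vanishes for every pair $f,g$.

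Finally I would translate this pointwise condition into the stated algebraic one. Because $\delta$ is $\mathbb R$-linear, $\mathrm{Im}(\delta)$ is a linear subspace of $A$, and by bilinearity of the product in $A$ the vanishing of $\delta(f)\cdot\delta(g)$ for all $f,g$ is equivalent to $(\mathrm{Im}(\delta))^2=0$. There is no real obstacle here; the only point requiring a word of care is this last equivalence, where one must observe that checking products on the generating set $\{\delta(f)\}$ of $\mathrm{Im}(\delta)$ suffices to conclude that the whole square vanishes. This mirrors exactly the final step in the proof of Lemma \ref{Lemma1}, so the two lemmas are really two faces of the same identity.
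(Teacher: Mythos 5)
Your proposal is correct and follows essentially the same route as the paper: the paper's proof consists precisely of the identity $\delta(f\cdot g) = p^A(f)\cdot\delta(g)+p^A(g)\cdot\delta(f)+\delta(f)\cdot\delta(g)$ and the observation that Leibniz's formula holds if and only if all products $\delta(f)\cdot\delta(g)$ vanish, which you derive in slightly more explicit detail. Your closing remark is also fine, with the minor simplification that $\mathrm{Im}(\delta)$ is exactly the set $\{\delta(f): f\in C^\infty(M)\}$ (a linear subspace, since $\delta$ is $\mathbb R$-linear), so the pointwise condition is immediately the vanishing of $(\mathrm{Im}(\delta))^2$.
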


\begin{proof}
  For all $f$ and $g$ in $C^\infty(M)$ we have
$$\delta(f\cdot g) = p^A(f)\cdot\delta(g)+p^A(g)\cdot\delta(f) + \delta(f)\cdot\delta(g).$$
Thus, $\delta$ satisfies Leibniz's formula if and only if for all $f$ and $g$ in $C^\infty(M)$ the
product $\delta(f)\cdot\delta(g)$ vanishes. 
\end{proof}

\begin{lemma}\label{Lemma3}
Let $I$ be an ideal of a $k$-algebra $A$ with $k$ a field of characteristic different from $2$. 
The square $I^2$ of the ideal vanishes if an only if for all $x\in I$ its square $x^2$ also vanishes. 
\end{lemma}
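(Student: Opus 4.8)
The statement is an equivalence, so the plan is to treat the two implications separately; the forward one is essentially trivial and the converse is where all the content (and the hypothesis on the characteristic) lives. For the forward implication, if $I^2=0$ then for any $x\in I$ we have $x^2 = x\cdot x\in I^2$, hence $x^2=0$ immediately. No hypothesis beyond $I^2=0$ is needed here.

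For the converse, I would argue by \emph{polarization}. The key observation is that $I^2$ is spanned as a $k$-module by the products $xy$ with $x,y\in I$, so it suffices to show that each such product vanishes. Fixing $x,y\in I$ and using that $I$ is an ideal (in particular a $k$-submodule, so closed under addition), we have $x+y\in I$, and therefore the hypothesis gives $(x+y)^2=0$. Expanding the square and discarding the terms $x^2$ and $y^2$, which vanish by assumption, collapses the identity to the cross terms; in the commutative setting of Weil algebras these add up to $2xy$, so that $2xy=0$.

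The single point at which the restriction on the characteristic is used is the final cancellation. Since $k$ is a field of characteristic different from $2$, the scalar $2$ is nonzero and hence invertible in $k$; multiplying the relation $2xy=0$ by $1/2$ yields $xy=0$. As $x$ and $y$ were arbitrary in $I$ and such products span $I^2$, this gives $I^2=0$ and closes the argument. I expect this cancellation to be the only genuinely delicate step, and I would flag explicitly that invertibility of $2$ is indispensable: without it the polarization only produces the weaker alternating relation, and the equivalence can fail.
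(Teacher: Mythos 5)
Your proposal is correct and matches the paper's proof essentially verbatim: both directions are handled the same way, with the converse established by the polarization identity $(x+y)^2 = x^2 + y^2 + 2xy = 2xy$ and cancellation of the invertible scalar $2$. Your explicit remarks that the products $xy$ span $I^2$ and that invertibility of $2$ is the one delicate point are slightly more careful than the paper's terse writeup, but the argument is identical.
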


\begin{proof}
If $I^2$ vanishes it is clear that $x^2=0$ for all $x\in I$. Conversely, let us assume that
the square of all elements of $I$ vanish. Let $x$ and $y$ be in $I$, then:
$$0 = (x+y)^2 = x^2 + y^2 + 2xy = 2xy.$$
Hence, $xy = 0$ and $I^2$ vanish. 
\end{proof}

Let $\phi\colon A\to B$ be a surjective morphism of Weil algebras, and let $I$ be its kernel ideal. 
Let us consider a smooth manifold $M$, and the induced fibre bundle $\hat\phi\colon M^A\to M^B$. The
linearization $\hat\phi'$ gives rise to an exact sequence
$$0\to TV_{p^A}^{\hat\phi}(M^A) \to T_{p^A}(M^A)\to T_{p^B}(M^B) \to 0$$
which defines the vertical tangent sub-bundle $TV^{\hat\phi}(M^A)\subset T(M^A)$. Taking into account
that tangent vectors are derivations from $C^\infty(M)$ to $A$, we will notice that $D\in T_{p^A}(M^A)$
belongs to $TV_{p^A}^{\hat\phi}(M^A)$ if and only if $Im(D)\subseteq I$. Then $TV_{p^A}^{\hat\phi}(M^A)$ is
the space of derivations from $C^\infty(M)$ to $I$, where the structure of $C^\infty(M)$-module
in $I$ is given by the morphism $p^A\colon C^\infty(M)\to A$. 

Let us assume taht $I^2$ vanishes. Let us consider $p^A$ and $q^A$ in the same fibre of the bundle,
\emph{id est} $\hat\phi(p^A) = \hat\phi(q^A) = p^B$. Thus, $p^A$ and $q^A$ induce the same structure
of $C^\infty(M)$-module in $I$. Hence, the space of derivations $TV_{p^A}^{\hat\phi}(M^A)$ is canonically
ismorphic to $TV_{q^A}^{\hat\phi}(M^A)$. \emph{We will denote this space by $TV^{\hat\phi}_{p^B}(M^B)$.}

Using Lemma \ref{Lemma1} and \ref{Lemma2} we conclude that for any pair of $A$-points $p^A$ and $q^A$ in
the fibre of $p^B$ as above the difference $p^A-q^A$ is
a derivation which belongs to the space $TV_{p^B}^{\hat\phi}(M^B)$. And also, for any derivation 
$D\in TV_{p^B}^{\hat\phi}(M^B)$, the sum $p^A+D$ is a near-point of type $A$ in fiber of $p^B$. 
Thus, the natural law of addition of linear maps,
$$\hat\phi^{-1}(p^B)\times TV_{p^B}^{\hat\phi}(M^B) \to \hat\phi^{-1}(p^B),\quad (p^A,D)\mapsto p^A+D$$
induces an affine structure on the fibre $\hat\phi^{-1}(p^B)$ associated with the vector
space $TV^{\hat\phi}_{p^B}(M^B)$ of derivations from $C^\infty(M)$ to $I$. 

We define the vector bundle $TV^{\hat{\phi}}(M^B)$ on $M^B$, 
$$TV^{\hat\phi}(M^B)\to M^B,$$
whose fibre over a $B$-point $p^B$ is the space $TV^{\hat\phi}_{p^B}(M^B)$. Hence, $TV^{\hat\phi}(M^B)$ is the vector
bundle associated with the affine bundle $\hat\phi\colon M^A\to M^B$,
$$M^A\times_{M^B} TV^{\hat\phi}(M^B) \to M^A, \quad (p^A,D)\mapsto p^A + D.$$

On the other hand, if $I^2$ does not vanish, by applying Lemma \ref{Lemma3} we find a derivation
$D\colon C^\infty \to I$ such that $(\mathrm{Im}(D))^2$ does not vanish. Hence, $p^A+D$ does not belong
to $M^A$. We have proved the following:

\begin{theorem}\label{Theorem3}
Let $\phi\colon A\to B$ be a surjective Weil algebra morphism, and let $I$ be its kernel ideal. For any
manifold $M$, the natural addition law of linear maps induces an structure of affine bundle in the
fibre bundle $\hat\phi\colon M^A\to M^B$ if and only if $I^2=0$. 
\end{theorem}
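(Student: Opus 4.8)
The plan is to prove the two implications of the equivalence separately, taking the construction carried out in the discussion above as the candidate affine structure and using Lemmas \ref{Lemma1}--\ref{Lemma3} as the main tools.

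For the implication $I^2=0 \Rightarrow$ affine bundle, I would first check that the vertical tangent space is independent of the chosen point in a fibre. If $\hat\phi(p^A)=\hat\phi(q^A)=p^B$ then $p^A(f)-q^A(f)\in\ker\phi=I$ for every $f$, so $(p^A(f)-q^A(f))\cdot x\in I^2=0$ for all $x\in I$; hence $p^A$ and $q^A$ induce the same $C^\infty(M)$-module structure on $I$ and the same space of $I$-valued derivations, which we denote $TV^{\hat\phi}_{p^B}(M^B)$. Since every $D$ in this space satisfies $\mathrm{Im}(D)\subseteq I$ and thus $(\mathrm{Im}(D))^2\subseteq I^2=0$, Lemma \ref{Lemma1} gives $p^A+D\in M^A$, and because $\phi\circ D=0$ this point lies again over $p^B$; conversely Lemma \ref{Lemma2} shows $q^A-p^A\in TV^{\hat\phi}_{p^B}(M^B)$ for any two points of the fibre. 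The remaining verification---that $(p^A,D)\mapsto p^A+D$ is a simply transitive smooth action of the vector bundle $TV^{\hat\phi}(M^B)$, so that $\hat\phi$ is an affine bundle---is bookkeeping: associativity and simple transitivity are immediate from the addition of $\mathbb R$-linear maps, and smoothness follows from the local triviality of the Weil bundle.

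For the converse I argue by contraposition: assuming $I^2\neq 0$ I must exhibit a vertical derivation that destroys the addition law. By Lemma \ref{Lemma3} (as $\mathrm{char}\,\mathbb R\neq 2$) there is $x\in I$ with $x^2\neq 0$. Fixing $p^A\in M^A$ over $p\in M$ and local coordinates $(x_1,\dots,x_n)$ at $p$, I would set, for $\eta_1,\dots,\eta_n\in A$,
\[
D(f)=\sum_{i=1}^{n} p^A\!\left(\frac{\partial f}{\partial x_i}\right)\eta_i,
\]
which is legitimate because $p^A(f)$ depends only on the germ of $f$ at $p$. The key computation is that $D$ is a derivation for the module structure induced by $p^A$: from $p^A(\partial_i(fg))=p^A(f)\,p^A(\partial_i g)+p^A(g)\,p^A(\partial_i f)$ one obtains $D(fg)=p^A(f)D(g)+p^A(g)D(f)$, i.e.\ the Leibniz rule \eqref{Leibniz}. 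Taking $\eta_1=x$ and $\eta_2=\cdots=\eta_n=0$ yields $\mathrm{Im}(D)\subseteq A\,x\subseteq I$, so $D$ is vertical, while $D(x_1)=p^A(1)\,x=x$ shows $x\in\mathrm{Im}(D)$ and therefore $x^2\in(\mathrm{Im}(D))^2$, so $(\mathrm{Im}(D))^2\neq 0$.

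By Lemma \ref{Lemma1} this gives $p^A+D\notin M^A$; yet the natural addition law would have to send the pair $(p^A,D)$, with $D\in TV^{\hat\phi}_{p^B}(M^B)$, into the fibre $\hat\phi^{-1}(p^B)\subseteq M^A$, a contradiction. Hence no affine structure arises when $I^2\neq 0$, and together with the first part this proves the equivalence. I expect the main obstacle to be precisely the construction of $D$ in the converse: one must produce a derivation for the twisted, $p^A$-dependent module structure on $A$ (not merely the point module at $p$), which is exactly why the coordinate expression uses $p^A(\partial f/\partial x_i)$ rather than $(\partial f/\partial x_i)(p)$. The degenerate case $\dim M=0$, where $M^A=M^B=M$ and the statement is vacuous, is dismissed separately.
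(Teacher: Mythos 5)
Your proof is correct and follows essentially the same route as the paper: the forward direction reproduces the paper's own discussion (the fibrewise-constant $C^\infty(M)$-module structure on $I$ via $I^2=0$, then Lemmas \ref{Lemma1} and \ref{Lemma2} to identify each fibre with the space of $I$-valued derivations), and the converse uses Lemma \ref{Lemma3} to produce $x\in I$ with $x^2\neq 0$ and contradicts Lemma \ref{Lemma1}. The one point worth noting is that where the paper merely asserts the existence of a derivation $D\colon C^\infty(M)\to I$ with $(\mathrm{Im}(D))^2\neq 0$, you construct it explicitly in coordinates as $D(f)=p^A(\partial f/\partial x_1)\,x$, verifying the Leibniz rule for the $p^A$-twisted module structure; this supplies the detail (together with the implicit hypothesis $\dim M\geq 1$, which you rightly flag) that the paper's proof glosses over.
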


  Be means of some elementary computations on the algebras $A_k$ and $\mathbb R^l_m$ we 
deduce the following corollaries to the Theorem \ref{Theorem3}.

\begin{corollary}\label{Corollary1}
Let $A$ be a Weil algebra of height $l$. The natural projection $M^A\to M^{A_k}$ is endowed
with a canonical structure of affine bundle if and only if $2k+1\geq l$. 
\end{corollary}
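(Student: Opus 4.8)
The plan is to specialize Theorem \ref{Theorem3} to the canonical surjection defining the projection and then translate the algebraic condition $I^2=0$ into the stated numerical inequality. First I would identify the relevant morphism: the projection $M^A\to M^{A_k}$ is the natural transformation $\hat\phi$ induced by the canonical surjection $\phi\colon A\to A_k = A/\mathfrak m_A^{k+1}$. Being surjective, Remark \ref{Remark1} already guarantees that it is a fibre bundle, and its kernel ideal is precisely $I=\mathfrak m_A^{k+1}$. Theorem \ref{Theorem3} then tells us that this fibre bundle is endowed with a canonical affine structure if and only if $I^2=0$, so the entire statement reduces to proving the equivalence $(\mathfrak m_A^{k+1})^2=0 \iff 2k+1\geq l$.

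The next step is pure ideal arithmetic. Since multiplication of two elements of $\mathfrak m_A^{k+1}$ lands in $\mathfrak m_A^{2(k+1)}$ and, conversely, every generator of $\mathfrak m_A^{2k+2}$ is such a product, we have $(\mathfrak m_A^{k+1})^2=\mathfrak m_A^{2k+2}$. It remains to decide when this power vanishes. Because $A$ has height $l$, by definition $\mathfrak m_A^{l+1}=0$ while $\mathfrak m_A^{l}\neq 0$; since the powers of the maximal ideal form a decreasing chain, $\mathfrak m_A^{j}\supseteq\mathfrak m_A^{l}\neq0$ for every $j\leq l$, whereas $\mathfrak m_A^{j}=0$ for every $j\geq l+1$. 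Thus $\mathfrak m_A^{j}=0$ exactly when $j\geq l+1$.

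Applying this with $j=2k+2$ yields $\mathfrak m_A^{2k+2}=0 \iff 2k+2\geq l+1 \iff 2k+1\geq l$, which completes the reduction and hence the proof. The main point to be careful about is the sharpness of the bound in the failing direction: when $2k+1<l$, i.e. $2k+2\leq l$, one must genuinely exhibit that $\mathfrak m_A^{2k+2}\neq0$, and this is exactly where the hypothesis that $l$ is the height (so $\mathfrak m_A^{j}\neq0$ for all $j\leq l$, not merely $\mathfrak m_A^{l+1}=0$) is used. Everything else is routine, so I do not expect any substantial obstacle beyond keeping the strict and non-strict inequalities consistent.
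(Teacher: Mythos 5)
Your proof is correct and is precisely the ``elementary computation'' the paper invokes: specialize Theorem \ref{Theorem3} to the canonical surjection $\phi\colon A\to A_k$ with kernel $I=\mathfrak m_A^{k+1}$, note $(\mathfrak m_A^{k+1})^2=\mathfrak m_A^{2k+2}$, and use the definition of height to get $\mathfrak m_A^{2k+2}=0 \iff 2k+1\geq l$. Your explicit attention to the failing direction (that $\mathfrak m_A^{j}\neq 0$ for all $j\leq l$, which gives sharpness when $2k+1<l$) is exactly the point the paper leaves implicit.
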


\begin{corollary}\label{Corollary2}
The natural projection of spaces of frames, $M^l_m\to M^k_m$ is endowed with a canonical structure
of affine bundle if and only if $2k+1\geq l$.
\end{corollary}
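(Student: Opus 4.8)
The plan is to recognize the projection $M^l_m \to M^k_m$ as the natural transformation induced by the canonical surjection of Weil algebras $\phi\colon \mathbb{R}^l_m \to \mathbb{R}^k_m$, and then simply invoke Theorem \ref{Theorem3}: this bundle carries a canonical affine structure precisely when the kernel $I = \ker\phi$ satisfies $I^2 = 0$. Everything thus reduces to an explicit computation of $I$ and $I^2$ inside $A = \mathbb{R}^l_m = \mathbb{R}[[\xi_1,\ldots,\xi_m]]/\mathfrak{m}^{l+1}$.

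First I would identify the surjection and its kernel. Writing $\mathfrak{m}_A = \mathfrak{m}/\mathfrak{m}^{l+1}$ for the maximal ideal of $A$, the map $\phi$ sends the class of a series modulo $\mathfrak{m}^{l+1}$ to its class modulo $\mathfrak{m}^{k+1}$; by the third isomorphism theorem $\mathbb{R}^k_m = A/\mathfrak{m}_A^{k+1}$ and the kernel is $I = \mathfrak{m}_A^{k+1} = \mathfrak{m}^{k+1}/\mathfrak{m}^{l+1}$. In the language of the preceding example this is exactly $A_k = \mathbb{R}^k_m$, so the statement is the instance of Corollary \ref{Corollary1} obtained by taking $A = \mathbb{R}^l_m$, which has height $l$.

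Next I would compute $I^2$. Since $I$ is the image of $\mathfrak{m}^{k+1}$, its square is the image of $(\mathfrak{m}^{k+1})^2 = \mathfrak{m}^{2k+2}$, so $I^2 = \mathfrak{m}_A^{2k+2} = \mathfrak{m}^{2k+2}/\mathfrak{m}^{l+1}$. This ideal vanishes if and only if $\mathfrak{m}^{2k+2} \subseteq \mathfrak{m}^{l+1}$ inside $\mathbb{R}[[\xi_1,\ldots,\xi_m]]$, which holds exactly when $2k+2 \geq l+1$, that is, $2k+1 \geq l$. Combined with Theorem \ref{Theorem3}, this gives the desired equivalence.

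The only point requiring care — and the main obstacle, such as it is — is the converse direction, namely establishing that $\mathfrak{m}^{2k+2}$ is \emph{not} contained in $\mathfrak{m}^{l+1}$ when $2k+1 < l$. For this I would exhibit a nonzero element of $\mathfrak{m}_A^{2k+2}$: assuming $m \geq 1$, the monomial $\xi_1^{2k+2}$ lies in $\mathfrak{m}^{2k+2}$ and is nonzero modulo $\mathfrak{m}^{l+1}$ as soon as $2k+2 \leq l$, since distinct monomials of degree at most $l$ remain linearly independent in $\mathbb{R}^l_m$. Hence $I^2 \neq 0$ whenever $2k+1 < l$. Equivalently, one may quote Lemma \ref{Lemma3} together with the nonvanishing of $\mathfrak{m}_A^{j}$ for $j \leq l$, which is just the statement that $A = \mathbb{R}^l_m$ has height $l$.
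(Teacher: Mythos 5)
Your proposal is correct and follows exactly the route the paper intends: the paper derives Corollary~\ref{Corollary2} from Theorem~\ref{Theorem3} via the ``elementary computations'' that the kernel of $\mathbb{R}^l_m\to\mathbb{R}^k_m$ is $I=\mathfrak{m}_A^{k+1}$ with $I^2=\mathfrak{m}_A^{2k+2}$, which vanishes precisely when $2k+1\geq l$. Your write-up merely makes those computations explicit (including the witness $\xi_1^{2k+2}$ for the converse, which is a welcome touch of care the paper leaves implicit), so there is nothing to add.
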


\section{Jet Spaces}

\begin{definition}\label{Definition2}
A jet of $M$ is an ideal of differentiable functions $\mathfrak p\subset C^\infty(M)$ such that
the quotient algebra $A_{\mathfrak p} = C^\infty(M)/\mathfrak p$ is a Weil algebra. A jet $\mathfrak p$ is
said to be of type $A$, or an $A$-jet, if $A_{\mathfrak p}$ is isomorphic to $A$. The set $J^AM$ of $A$-jets of
$M$ is the so calle $A$-jet space of $M$. 
\end{definition}

An a point $p^A$ of $M$ is said to be \emph{regular} if it is a surjective morphism. The set of regular
$A$-points of $M$ is denoted by $\check M^A$. It is a dense open subset of $M^A$. It is obvious that
an $A$-point is regular if and only if its kernel is an $A$-jet. Thus, we have a surjective map:
\begin{equation}\label{Equation2}
\ker\colon \check M^A \to J^A M.
\end{equation}

Let us consider $\Aut(A)$, the group of \emph{automorphisms} of $A$. It is a linear algebraic 
group, as can be seen easily by representing it as a subgroup of $GL(\mathfrak m_A)$ 
 (see \cite{Blazquez2005}). This group $\Aut(A)$ acts on $\check M^A$ by composition. Two 
$A$-points related by an automorphism have the same kernel ideal. Moreover, two $A$-points with the
same kernel ideal are related by an automorphism. In this way $J^AM$ is identified with the space
of orbits $\check M^A/\Aut(A)$, and its manifold structure is determined in this way (see \cite{Alonso2000}).

\begin{example}\label{Example6}
The group $G^l_m$ of automorphisms of $\mathbb R^l_m$ is called the \emph{$l$-th prolongation
of the linear group of rank $m$} (see \cite{Olver1999}), also called \emph{jet group}. In particular
$G^1_m$ is the linear group of rank $m$. The group $G^l_m$ is the group of transformations of $\mathbb R^l$
around a fixed point up to order $l$. 
\end{example}

\begin{theorem}[Alonso-Blanco \cite{Alonso2000}] \label{Theorem4} There is a unique structure of smooth manifold
on $J^AM$ such that the map $\ker$ (appearing in equation \eqref{Equation2}) is a principal bundle
with structural group $\Aut(A)$.
\end{theorem}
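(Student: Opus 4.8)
The plan is to recognize $\ker$ as the quotient map of a free and proper Lie group action and then to invoke the quotient manifold theorem. The discussion preceding the statement already identifies $J^AM$ with the orbit space $\check M^A/\Aut(A)$ as a set, and $\check M^A$, being a dense open subset of the manifold $M^A$, is itself a smooth manifold. Thus it suffices to check that the composition action
$$\Aut(A)\times\check M^A\to\check M^A,\qquad (g,p^A)\mapsto g\circ p^A,$$
is smooth, free and proper. The quotient manifold theorem then equips $\check M^A/\Aut(A)=J^AM$ with a unique smooth structure for which $\ker$ is a surjective submersion, and for a free and proper action such a submersion is automatically a principal $\Aut(A)$-bundle, which is exactly the assertion.

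Smoothness and freeness are the routine steps. For smoothness I would write the action in real components relative to a basis $\{a_k\}$ of $A$: if $g(a_k)=\sum_j g_{jk}a_j$ and $f_k$ denotes the $k$-th component of $p^A$, then the $j$-th component of $g\circ p^A$ is $\sum_k g_{jk}f_k$, which depends polynomially on the entries $g_{jk}$ and linearly on the $f_k$. Freeness is precisely where regularity enters: if $g\circ p^A=p^A$ with $p^A$ surjective, then $g$ fixes every element of $A=\operatorname{Im}(p^A)$, so $g=\mathrm{id}_A$. On all of $M^A$ the stabilizers would be nontrivial, which is why the restriction to the regular locus $\check M^A$ is indispensable.

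The main obstacle is properness, since $\Aut(A)$ is generally noncompact. I would prove it directly from surjectivity. Working over a coordinate chart, a regular $A$-point is recorded by the images $v_i=p^A(x_i-x_i(p))\in\mathfrak m_A$ of the coordinate functions, and regularity means that these generate $\mathfrak m_A$, equivalently that they span $\mathfrak m_A/\mathfrak m_A^2$; the action then reads $g\cdot(v_1,\dots,v_n)=(g(v_1),\dots,g(v_n))$. Given sequences with $p_n^A\to p^A$ and $g_n\circ p_n^A\to q^A$ in $\check M^A$, the generators converge, a fixed basis of $\mathfrak m_A$ can be expressed as polynomials in them, and since each $g_n$ is an algebra morphism it is determined on that basis by the converging quantities $g_n(v_i)$; hence the matrices of $g_n$ converge in $\mathrm{End}(\mathfrak m_A)$. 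Because $\Aut(A)$ is closed in $GL(\mathfrak m_A)$ and the limit endomorphism is again an automorphism (it is a morphism sending a generating set to a generating set), $(g_n)$ admits a convergent subsequence, proving properness. With freeness and properness in hand, the slice theorem provides, at each regular point, a transverse slice $S$ such that $\Aut(A)\times S\to\check M^A$ is a diffeomorphism onto an open set $V$, which is exactly a local trivialization exhibiting $\ker^{-1}(U)\cong U\times\Aut(A)$ over $U=\ker(V)$.

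Finally, uniqueness of the smooth structure is automatic: any two smooth structures on $J^AM$ making $\ker$ a surjective submersion render the identity map smooth in both directions, hence a diffeomorphism. The only genuinely delicate point is properness, and within it the reconstruction of $g_n$ from the converging generators; every other ingredient is a direct application of the quotient manifold and slice theorems.
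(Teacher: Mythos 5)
The paper itself contains no proof of this statement: it is imported verbatim from Alonso-Blanco \cite{Alonso2000}, and the surrounding text records only the set-theoretic identification $J^AM \simeq \check M^A/\Aut(A)$ that your argument starts from. Your route --- smooth, free, proper action of $\Aut(A)$ on $\check M^A$, then the quotient manifold and slice theorems --- is the natural proof consistent with that framing. The smoothness computation in real components, the use of regularity (surjectivity of $p^A$) to get freeness, and the uniqueness argument via surjective submersions are all correct; in particular you are right that freeness is exactly where the restriction to $\check M^A$ is indispensable.

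The one step that needs repair is the one you flag: reconstructing $g_n$ from the converging generators. As written, ``a fixed basis of $\mathfrak m_A$ can be expressed as polynomials in them'' is ambiguous, and the reading ``express the basis as polynomials $P$ in the \emph{limit} generators $v_i$, then apply $g_n$'' fails: $g_n$ is only controlled on the $v_i^{(n)}$, and decomposing $g_n(b) = P\bigl(g_n(v^{(n)})\bigr) + g_n(\epsilon_n)$ with $\epsilon_n = P(v)-P(v^{(n)})\to 0$ does not yield $g_n(\epsilon_n)\to 0$ without an a priori bound on $\|g_n\|$ --- which is precisely what properness is supposed to produce, so the argument as stated is circular at this point. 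The standard repair: for $n$ large, express the fixed basis of $\mathfrak m_A$ as polynomials in $v_1^{(n)},\dots,v_r^{(n)}$ with coefficients obtained by solving linear systems (in the monomials of degree at most the height of $A$) whose matrices depend continuously on the generating tuple and are invertible at the limit tuple $v$; the coefficients then converge, so $g_n(b)=P_n\bigl(g_n(v^{(n)})\bigr)$ converges for each basis element $b$. Your remaining observations --- that the limit is an algebra morphism carrying a generating set of $\mathfrak m_A$ to a generating set (by regularity of $q^A$), hence surjective, hence bijective by finite dimensionality --- then close properness, and with this detail inserted the proof is complete.
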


\begin{example}\label{Exmaple7}
Let us denote by $J^l_mM$ the space of jets of type $\mathbb R^l_m$ of $M$. Thus, $J^l_mM$ is the
space of germs of $m$-submanifolds of $M$ up to order $l$.
\end{example}

The space $J^A M$ is a bundle over $M$. We will say that that $\mathfrak p$ is a jet over the point $p$ 
if $\mathfrak p\subset \mathfrak m_p$, where $\mathfrak m_p$ is the ideal of smooth functions 
vanishing at $p$. If $p^A$ is an $A$-point \emph{near} $p$ then $\ker(p^A)$ is a jet \emph{over} $p$. 

\subsection{Functorial behaviour}

  In contrast with Weil bundles, jet spaces do not show
a functorial behaviour. A smooth map $f\colon M\to N$ induces a smooth map
on jet spaces, but in the general case it is defined only on an open dense subset
of $J^AM$, which depends on $f$. There is no natural object associated to a Weil
algebra morphism $A\to B$. There is a natural, highly interesting, object associated
to a pair $(A,B)$ of Weil algebras: the \emph{Lie correspondence}. This is a
submanifold $\Lambda_{A,B} M$ of the fibred product $J^AM\times_M J^BM$,
$$\Lambda_{A,B} M = \{ ( \mathfrak p, \bar{\mathfrak p}) \in J^AM\times_M J^BM \colon
\mathfrak p \subset \bar{\mathfrak p} \}.$$
The Lie correspondence is empty if and only if there does not exist any surjective
morphism from $A$ to $B$. There is a special case to be analyzed in which it is the
graph of a bundle $J^A M \to J^B M$.

Let $I$ be an ideal of $A$. Then, for each automorphism $\sigma$ of $A$, the space
$\sigma(I)$ is another ideal of $A$; the group $\Aut(A)$ acts in the set of ideals
of $A$. We say that $I$ is an \emph{invariant ideal} of $A$ if for all $\sigma\in\Aut(A)$
we have $I = \sigma(I)$. For each positive integer $k$ the $k$-th power of the
maximal ideal $\mathfrak m_A^k$ is an invariant ideal, and any other ideals obtained
from these by general processes of division and derivation are also invariant; some
examples are shown in \cite{Alonso2004}. Let $I\subset A$ be an invariant ideal and 
$\phi\colon A\to A/I = B$ the canonical projection into the quotient algebra $B$.
Let $p^A$ be an $A$-point and $\mathfrak p$ be its kernel. It is obvious that 
that the kernel ideal $\bar{\mathfrak p}$ of the composition $\phi\circ p^A$ is the unique $B$-jet
containing $\mathfrak p$. Let us denote by $\check \phi$ the restriction of $\hat\phi$ to 
the space of regular points $\check M^A$. We have a commutative diagram:
\begin{equation}\label{Equation3}
\xymatrix{\check  M^A \ar[r]^-{\check \phi} \ar[d] & \check M^B \ar[d] \\ J^A M \ar[r]^-{\phi^J} & J^BM }
\quad\xymatrix{p^A\ar[r]\ar[d] & \check \phi(p^A) = p^B \ar[d]\\ 
\mathfrak p\ar[r] & \bar{\mathfrak p}}
\end{equation}
The Lie correspondence is precisely the set
$$\Lambda_{A,B}M = \{(\mathfrak p, \phi^J(\mathfrak p)) \colon \mathfrak p \in J^AM\}.$$
Summarizing, the following result holds:

\begin{theorem}\label{Theorem5}
If $I\subset A$ is an invariant ideal and $B$ is the quotient algebra $A/I$ then there is
a canonical bundle structure $J^AM\to J^BM$. 
\end{theorem}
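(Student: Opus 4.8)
The plan is to verify that the rule $\mathfrak p \mapsto \ker(\phi\circ p^A)$, for any regular $A$-point $p^A$ with $\ker p^A=\mathfrak p$, is a well-defined map $\phi^J\colon J^AM\to J^BM$, and then to promote it to a fibre bundle by descending the structure of $\check\phi$ through the two principal bundles of Theorem~\ref{Theorem4}.

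First I would settle well-definedness, which is precisely where invariance of $I$ is indispensable. Writing the kernel as a preimage gives $\ker(\phi\circ p^A)=(p^A)^{-1}(\phi^{-1}(0))=(p^A)^{-1}(I)$. By Theorem~\ref{Theorem4} any two regular $A$-points with the same kernel $\mathfrak p$ differ by an automorphism, $q^A=\sigma\circ p^A$ with $\sigma\in\Aut(A)$, whence $(q^A)^{-1}(I)=(p^A)^{-1}(\sigma^{-1}(I))=(p^A)^{-1}(I)$ exactly because $\sigma^{-1}(I)=I$. Thus $\bar{\mathfrak p}=(p^A)^{-1}(I)$ depends only on $\mathfrak p$, and it is a $B$-jet since $\phi\circ p^A$ is a composition of surjections with quotient algebra $A/I=B$. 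I would note in passing that without invariance some $\sigma$ would move $(p^A)^{-1}(I)$ and no descent could exist, so invariance is exactly the algebraic condition the statement requires.

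Next, smoothness comes for free from the commutative diagram~\eqref{Equation3}. The composite $\ker\circ\check\phi\colon\check M^A\to J^BM$ is smooth, being the smooth natural transformation $\check\phi$ followed by the bundle projection $\ker$; since the left vertical map $\ker\colon\check M^A\to J^AM$ is a surjective submersion and $\phi^J\circ\ker=\ker\circ\check\phi$, the universal property of submersions yields a unique smooth $\phi^J$ filling the square.

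Finally, for the bundle structure I would transport local triviality from $\check\phi$. Every $\sigma\in\Aut(A)$ fixes $I$ and so induces $\bar\sigma\in\Aut(B)$, giving a homomorphism $\rho\colon\Aut(A)\to\Aut(B)$ over which $\check\phi$ is equivariant, $\check\phi(\sigma\circ p^A)=\bar\sigma\circ\check\phi(p^A)$. Restricting the fibre bundle $\hat\phi\colon M^A\to M^B$ of Remark~\ref{Remark1} to the regular loci exhibits $\check\phi$ as a fibre bundle, and I would combine a local section of $\ker\colon\check M^B\to J^BM$ with a local trivialization of $\check\phi$ to build a local trivialization of $\phi^J$, the fibre of $\phi^J$ arising as the quotient of a $\check\phi$-fibre by the subgroup $\ker\rho\subset\Aut(A)$ of automorphisms inducing the identity on $B$. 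The main obstacle lies exactly here: one must check that these data descend compatibly through the $\Aut$-quotients. In particular one must confirm that $\check\phi$ genuinely restricts to a fibre bundle over the regular locus, that $\phi^J$ is surjective (i.e. every $B$-jet admits a regular $A$-point lift, which is where the nonemptiness and dimension convention on $J^AM$ enters), and that the chosen trivializations are equivariant enough to pass to the orbit spaces. The well-definedness and smoothness steps are routine; verifying this equivariant descent to honest local triviality is the delicate part.
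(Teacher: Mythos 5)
Your proposal is correct and follows essentially the same route as the paper, whose proof of Theorem~\ref{Theorem5} is precisely the discussion culminating in diagram~\eqref{Equation3}: define $\phi^J(\mathfrak p)=\ker(\phi\circ p^A)=(p^A)^{-1}(I)$, use invariance of $I$ under $\Aut(A)$ for well-definedness (the paper phrases this as $\bar{\mathfrak p}$ being the $B$-jet canonically attached to $\mathfrak p$), and descend the structure of $\check\phi$ through the principal bundles of Theorem~\ref{Theorem4}. Your write-up is in fact more explicit than the paper's, which leaves the smoothness and equivariant local-triviality verifications you flag entirely implicit in the references to \cite{Alonso2000}.
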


\subsection{Tangent structure}

In order to study the linearization of $\phi^J$ in diagram \eqref{Equation3} we need some
characterization of the tangent space to $J^AM$ at a jet $\mathfrak p$.

\begin{theorem}[\cite{Alonso2000, Munoz2000}] \label{Theorem6}
 The space $T_{\mathfrak p}(J^AM)$ realizes itself
canonically as a quotient of the space of derivations $C^\infty(M)\to A_{\mathfrak p}$. A derivation
$\delta$ defines the null vector if and only if $\delta(\mathfrak p) = 0$. Thus,
$$T_{\mathfrak p}(J^AM) \simeq \Der(C^\infty(M),A_{\mathfrak p})/\Der(A_{\mathfrak p},A_{\mathfrak p}),$$
\end{theorem}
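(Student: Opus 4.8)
The plan is to exploit the principal bundle structure of Theorem \ref{Theorem4} and to compute the differential of the quotient map $\ker\colon \check M^A\to J^AM$ at a canonical point lying over $\mathfrak p$. Given the jet $\mathfrak p$, the canonical projection $\pi\colon C^\infty(M)\to A_{\mathfrak p}=C^\infty(M)/\mathfrak p$ is itself a regular $A_{\mathfrak p}$-point with $\ker\pi=\mathfrak p$; since $A\simeq A_{\mathfrak p}$ we may identify $J^AM=J^{A_{\mathfrak p}}M$ and work with $\pi$, which is canonically attached to $\mathfrak p$ and therefore makes every identification below independent of auxiliary choices. By Theorem \ref{Theorem4} the map $\ker$ is a principal $\Aut(A_{\mathfrak p})$-bundle, so its differential $\ker_*\colon T_\pi(\check M^{A_{\mathfrak p}})\to T_{\mathfrak p}(J^AM)$ is surjective and its kernel is exactly the tangent space to the fibre through $\pi$.

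First I would identify the source of this surjection. Since $\check M^{A_{\mathfrak p}}$ is open in $M^{A_{\mathfrak p}}$, the identification of tangent vectors with derivations recalled above gives $T_\pi(\check M^{A_{\mathfrak p}})=\Der_\pi(C^\infty(M),A_{\mathfrak p})$, the derivations of $C^\infty(M)$ into $A_{\mathfrak p}$ with module structure induced by $\pi$; this is the space $\Der(C^\infty(M),A_{\mathfrak p})$ of the statement. Thus $\ker_*$ realizes $T_{\mathfrak p}(J^AM)$ as a quotient of $\Der(C^\infty(M),A_{\mathfrak p})$, and it remains to identify the kernel, that is, the derivations defining the null vector.

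Next I would compute the tangent space to the fibre, which is the $\Aut(A_{\mathfrak p})$-orbit of $\pi$. The Lie algebra of $\Aut(A_{\mathfrak p})$ is $\Der(A_{\mathfrak p},A_{\mathfrak p})$: a derivation annihilates the constants and preserves $\mathfrak m_{A_{\mathfrak p}}$ (if $\partial x$ were a unit, $0=\partial(x^N)=Nx^{N-1}\partial x$ would force $x^{N-1}=0$), so it lies in $\mathfrak{gl}(\mathfrak m_{A_{\mathfrak p}})$, matching the embedding $\Aut(A_{\mathfrak p})\subset GL(\mathfrak m_{A_{\mathfrak p}})$. Differentiating a one-parameter family $\sigma_t$ of automorphisms applied to $\pi$ shows that the infinitesimal action sends $\partial\in\Der(A_{\mathfrak p},A_{\mathfrak p})$ to $\partial\circ\pi$; hence the vertical tangent space is $\{\partial\circ\pi : \partial\in\Der(A_{\mathfrak p},A_{\mathfrak p})\}$. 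Because $\pi$ is surjective the assignment $\partial\mapsto\partial\circ\pi$ is injective, so this space is isomorphic to $\Der(A_{\mathfrak p},A_{\mathfrak p})$ and the quotient in the statement is meaningful.

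Finally I would prove the characterization of the null vector by establishing
$$\{\partial\circ\pi : \partial\in\Der(A_{\mathfrak p},A_{\mathfrak p})\}=\{\delta\in\Der(C^\infty(M),A_{\mathfrak p}) : \delta(\mathfrak p)=0\}.$$
The inclusion $\subseteq$ is immediate since $\pi(\mathfrak p)=0$. For $\supseteq$, a derivation $\delta$ with $\delta(\mathfrak p)=0$ factors through $\pi$ as $\delta=\bar\delta\circ\pi$ for a well-defined linear map $\bar\delta\colon A_{\mathfrak p}\to A_{\mathfrak p}$; writing $a=\pi(f)$, $b=\pi(g)$ and applying the Leibniz rule for $\delta$ shows $\bar\delta(ab)=a\bar\delta(b)+b\bar\delta(a)$, so $\bar\delta\in\Der(A_{\mathfrak p},A_{\mathfrak p})$. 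Combined with the surjectivity of $\ker_*$, this yields $T_{\mathfrak p}(J^AM)\simeq\Der(C^\infty(M),A_{\mathfrak p})/\Der(A_{\mathfrak p},A_{\mathfrak p})$. I expect the main obstacle to be the third step: correctly identifying the Lie algebra of $\Aut(A_{\mathfrak p})$ together with its infinitesimal action on $\pi$, in particular justifying that derivations of a Weil algebra preserve its maximal ideal so that the orbit's tangent space is precisely the image of $\Der(A_{\mathfrak p},A_{\mathfrak p})$; the factoring argument of the last step is then routine.
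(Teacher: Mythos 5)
Your proposal is correct and follows essentially the same route as the paper: differentiate the principal $\Aut(A)$-bundle $\ker\colon \check M^A\to J^AM$ of Theorem \ref{Theorem4}, identify $T_{p^A}(M^A)$ with $\Der_{p^A}(C^\infty(M),A)$, and realize the vertical space as the fundamental vector fields of $\Der(A_{\mathfrak p},A_{\mathfrak p})$, i.e.\ the image of $\partial\mapsto\partial\circ\pi$. Your one refinement is to base the computation at the canonical projection $\pi_{\mathfrak p}$ instead of an arbitrary regular point $p^A$ over $\mathfrak p$, which makes independence of choices automatic (the paper instead picks some $p^A$ and appeals to the principal structure a posteriori), and your factorization $\delta=\bar\delta\circ\pi$ makes explicit the characterization that $\delta$ defines the null vector if and only if $\delta(\mathfrak p)=0$, which the paper's sketch leaves implicit.
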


  In order to a better understanding let us give some sketch of the proof. Let us remind that the Lie
algebra of $\Aut(A)$ is the space of derivations $\Der(A,A)$ as can be shown in a matrix representation
of the group (see \cite{Munoz2000, Blazquez2005}). Taking $p^A\in \check M^A$ such that 
$\ker(p^A) = \mathfrak p$, the representation of $\Der(A,A)$ as fundamental vector fields of the action
of $\Aut(A)$ on $\check M^A$ gives rise to an exact sequence:
$$0 \to \Der(A, A) \xrightarrow{\mbox{\footnotesize{fun. vec. fields}}} 
T_{p^A}(M^A) \xrightarrow{\ker'} T_{\mathfrak p}J^A(M) \to 0.$$

  Let us take into account that $T_{p^A}(M^A) $ is the space of derivations \linebreak $\Der_{p^A}(C^\infty(M), A)$ and
that $p^A$ induces a isomorphism of $C^\infty$-algebras between $A$ and $A_{\mathfrak p}$. Thus,
it follows the isomorphism of the theorem. This isomorphism does not depends on the $A$-point $p^A$
representing the $A$-jet $\mathfrak p$. It can be seen by means of the principal structure stated
in Theorem \ref{Theorem4}. 

\section{Affine Structure on Jet Spaces} 

\subsection{Space of Regular Points}

  Let $I$ be an ideal of the Weil algebra $A$, and $\phi\colon A \to B$ the canonical projection
onto the quotient algebra $B=A/I$.

\begin{lemma}[\cite{Munoz2000}]\label{Lemma4} 
A finite set $\{a_1,\ldots,a_m\}\subset \mathfrak m_A$ is a system of
generators of $A$ if and only if the set of their classes $\{\bar a_1,\ldots \bar a_m\}$ in $\mathfrak m_A/
\mathfrak m_A^2$ is a basis of $\mathfrak m_A/
\mathfrak m_A^2$.
\end{lemma}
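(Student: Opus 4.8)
The plan is to prove this as the Weil-algebra incarnation of Nakayama's lemma, exploiting that $\mathfrak m_A$ is nilpotent (of some height $l$, so $\mathfrak m_A^{l+1}=0$) and that the residue field is $\mathbb R$, whence $A = \mathbb R\cdot 1 \oplus \mathfrak m_A$ as vector spaces. First I would reformulate the hypothesis: the $a_i$ generate $A$ as an $\mathbb R$-algebra exactly when the $\mathbb R$-span of all monomials $a_{i_1}\cdots a_{i_r}$ (including the empty monomial $1$) is all of $A$; equivalently, since the unit already accounts for the $\mathbb R$-summand, the positive-degree monomials must span $\mathfrak m_A$. With $V := \mathfrak m_A/\mathfrak m_A^2$ and the projection $\pi\colon \mathfrak m_A \to V$, the whole argument compares these monomials degree by degree.

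For the forward implication, suppose the $a_i$ generate $A$. Every monomial of degree $\geq 2$ lies in $\mathfrak m_A^2 = \ker\pi$, and the degree-$0$ monomial is the unit; hence, modulo $\mathbb R\cdot 1 \oplus \mathfrak m_A^2$, only the degree-one monomials $a_1,\dots,a_m$ survive. Since the monomials span $A$ and $A/(\mathbb R\cdot 1 \oplus \mathfrak m_A^2)\cong V$, the classes $\bar a_1,\dots,\bar a_m$ must span $V$. This direction is essentially bookkeeping.

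For the converse I would give a direct, elementary form of Nakayama using nilpotency rather than invoking it as a black box. If the $\bar a_i$ span $V$, then $\mathfrak m_A = \sum_i \mathbb R\,a_i + \mathfrak m_A^2$. Substituting this expression into its own $\mathfrak m_A^2$ term repeatedly shows that, modulo $\mathfrak m_A^{k}$, every element of $\mathfrak m_A$ is a polynomial with no constant term in the $a_i$; because $\mathfrak m_A^{l+1}=0$, after finitely many steps the remainder vanishes and $\mathfrak m_A$ is contained in the subalgebra $S$ generated by $1,a_1,\dots,a_m$. Together with $1\in S$ and $A=\mathbb R\cdot 1\oplus\mathfrak m_A$ this gives $S=A$, so the $a_i$ generate $A$. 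The one point demanding care here is the iteration bookkeeping, keeping track of which monomial degrees appear at each stage.

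Finally, to upgrade ``spanning set of $V$'' to ``basis of $V$'', I would use that the cardinality $m$ of the set equals the \emph{width} of $A$, that is $m=\dim_{\mathbb R}V$ (as the notation $\{a_1,\dots,a_m\}$ suggests): a spanning set of an $m$-dimensional space having exactly $m$ elements is automatically a basis, and conversely a basis is a spanning set. The main obstacle, and the only genuinely algebraic content, is the converse (Nakayama) step; identifying where the cardinality hypothesis $m=\dim_{\mathbb R} V$ enters is precisely what makes the statement an equivalence with \emph{basis} rather than merely with \emph{spanning set}.
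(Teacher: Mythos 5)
The paper gives no proof of this lemma at all---it is quoted from \cite{Munoz2000}---so there is no internal argument to compare against, and your proposal must be judged on its own. Its core is sound and is the standard proof: the forward direction is, as you say, bookkeeping (every monomial in the $a_i$ of degree $\geq 2$ lies in $\mathfrak m_A^2$, so generation forces the classes $\bar a_i$ to span $\mathfrak m_A/\mathfrak m_A^2$), and the converse is the elementary nilpotent form of Nakayama: from $\mathfrak m_A=\sum_i\mathbb R\,a_i+\mathfrak m_A^2$ one gets by induction $\mathfrak m_A\subseteq S_+ +\mathfrak m_A^k$ for every $k$, where $S_+$ is the span of the positive-degree monomials in the $a_i$, and $\mathfrak m_A^{l+1}=0$ kills the remainder. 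The iteration bookkeeping you flag does go through without incident.

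The step that deserves scrutiny is your final ``upgrade'' from spanning to basis, and you have correctly located the real issue: as literally written, with no hypothesis tying $m$ to the width of $A$, the lemma is \emph{false}. In $A=\mathbb R[x]/(x^3)$ the set $\{x,x^2\}\subset\mathfrak m_A$ generates $A$, yet the classes are $\{\bar x,0\}$, which is not a basis of the one-dimensional space $\mathfrak m_A/\mathfrak m_A^2$. Generation is equivalent to the classes \emph{spanning} $\mathfrak m_A/\mathfrak m_A^2$; ``basis'' becomes equivalent only under the additional hypothesis $m=\dim_{\mathbb R}(\mathfrak m_A/\mathfrak m_A^2)$, or equivalently for \emph{minimal} systems of generators. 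You import this hypothesis from the notation (``as the notation suggests''), which is the right repair and matches how the lemma is actually deployed in the paper---in the proof of Lemma \ref{Lemma6} exactly width-many functions are chosen, so both directions are used with $m$ equal to the width---but it should be stated as an explicit assumption rather than inferred from the letter $m$, since nothing in the statement guarantees it. With that hypothesis made explicit (or with ``basis'' weakened to ``system of generators of $\mathfrak m_A/\mathfrak m_A^2$''), your proof is complete and correct.
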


\begin{lemma}\label{Lemma5}
If $I \nsubseteq \mathfrak m_A^2$ then there exists a non trivial subalgebra $S\subset A$ such that
$S/(S\cap I)\simeq B$. 
\end{lemma}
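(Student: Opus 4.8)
The plan is to produce $S$ as a subalgebra on which the projection $\phi\colon A\to B$ restricts to a surjection, and then invoke the second isomorphism theorem. For any subalgebra $S\subset A$ the restriction $\phi|_S$ has kernel $S\cap I$ and image the subalgebra $\phi(S)\subset B$, so $S/(S\cap I)\simeq \phi(S)$; this equals $B$ precisely when $\phi(S)=B$, which in turn is equivalent to $S+I=A$. Thus the whole statement reduces to finding a \emph{proper} subalgebra $S\subsetneq A$ with $S+I=A$.

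To build such an $S$, I would first use the hypothesis $I\nsubseteq\mathfrak m_A^2$ to pick an element $x\in I$ whose class $\bar x$ in $\mathfrak m_A/\mathfrak m_A^2$ is nonzero. Completing $\bar x$ to a basis $\{\bar x,\bar a_2,\ldots,\bar a_m\}$ of $\mathfrak m_A/\mathfrak m_A^2$, where $m$ is the width of $A$, and invoking Lemma \ref{Lemma4}, the set $\{x,a_2,\ldots,a_m\}$ is a system of generators of $A$. I then let $S$ be the subalgebra generated by $\{a_2,\ldots,a_m\}$, that is, the generators other than the chosen $x$.

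Two points remain. First, $S+I=A$: since $x\in I$ we have $\phi(x)=0$, so the images $\phi(a_2),\ldots,\phi(a_m)$ already generate $B=A/I$; as these lie in $\phi(S)$, the restriction $\phi|_S$ is surjective, which is equivalent to $S+I=A$, giving the isomorphism $S/(S\cap I)\simeq B$. Second, $S$ is a proper subalgebra: $S$ is local with maximal ideal $\mathfrak m_S=S\cap\mathfrak m_A$ (every element $\lambda+n$ with $\lambda\neq 0$ is invertible in $S$, since its inverse is a real polynomial in the nilpotent $n$), and $S$ is generated by the $m-1$ elements $a_2,\ldots,a_m\in\mathfrak m_S$, so by Nakayama their classes span $\mathfrak m_S/\mathfrak m_S^2$ and $\dim\mathfrak m_S/\mathfrak m_S^2\leq m-1$. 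Were $S=A$ we would get $\dim\mathfrak m_A/\mathfrak m_A^2\leq m-1$, contradicting that the width of $A$ is $m$; hence $S\subsetneq A$.

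I expect the genuine obstacle to be the properness (non-triviality) of $S$ rather than the isomorphism: one must ensure that discarding the generator $x$ truly lowers the minimal number of generators. This is exactly why $x$ must be chosen \emph{outside} $\mathfrak m_A^2$, so that $\bar x$ is an honest basis vector of $\mathfrak m_A/\mathfrak m_A^2$ that can be removed, and it is precisely here that the hypothesis $I\nsubseteq\mathfrak m_A^2$ enters. The counting through Lemma \ref{Lemma4} together with Nakayama's lemma then settles properness cleanly.
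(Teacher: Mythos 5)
Your proof is correct and takes essentially the paper's own approach: both construct $S$ as the subalgebra $\mathbb R[\cdot]$ generated by elements of $\mathfrak m_A$ whose $\phi$-images generate $B$ but whose classes are too few to span $\mathfrak m_A/\mathfrak m_A^2$, with Lemma \ref{Lemma4} (in your case, equivalently Nakayama) yielding both the surjectivity of $\phi|_S$ and the properness of $S$. The only cosmetic difference is the choice of generators: the paper lifts a basis of $\mathfrak m_B/\mathfrak m_B^2$ to $r<m$ elements of $\mathfrak m_A$, while you complete the class of some $x\in I\setminus\mathfrak m_A^2$ to a basis of $\mathfrak m_A/\mathfrak m_A^2$ and discard $x$, keeping $m-1$ generators.
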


\begin{proof}
If $I \nsubseteq \mathfrak m_A^2$ then the canonical projection 
$\mathfrak m_A/\mathfrak m_A^2 \to \mathfrak m_B/\mathfrak m_B^2$ has non-trivial kernel. 
There exists a finite set $\{a_1,\ldots,a_r\}$ such that 
$\{\overline{\phi(a_1)}, \ldots, \overline{\phi(a_r)}\}$ is a basis of $\mathfrak m_B/\mathfrak m_B^2$
but $\{\bar a_1,\ldots, \bar a_r\}$ is not a basis of $\mathfrak m_A/\mathfrak m_A^2$. Then
$S = \mathbb R[a_1,\ldots, a_r]$ is a proper subalgebra and verifies $S/(S\cap I) = B$. 
\end{proof}

  Note that each subalgebra of a $A$ is a Weil algebra. For each subset $X\subset \mathfrak m_A$,
$\mathbb R[X]$ is a Weil algebra and  its maximal ideal is spanned by $X$. 

\begin{lemma}\label{Lemma6}
  The following conditions are equivalent:
\begin{itemize}
\item[(1)] $I\subseteq \mathfrak m_A^2$. 
\item[(2)] $\hat\phi^{-1}(\check M^B) \subseteq \check M^A$. %$\forall p^A\in M^A$, if $\hat\phi(p^A)$ is regular then $p^A$ is also regular.
\end{itemize}
\end{lemma}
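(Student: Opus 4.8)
The plan is to translate both conditions into purely algebraic statements about the \emph{image} subalgebra of an $A$-point, and then prove the two implications separately. First I would record the elementary dictionary: an $A$-point $p^A\colon C^\infty(M)\to A$ is regular exactly when $\mathrm{Im}(p^A)=A$, and the image $S=\mathrm{Im}(p^A)$ is always a unital subalgebra of $A$, hence itself a Weil algebra. Moreover $\hat\phi(p^A)=\phi\circ p^A$ is regular, \emph{id est} $p^A\in\hat\phi^{-1}(\check M^B)$, precisely when $\phi(S)=B$, which in terms of $I=\ker\phi$ reads $S+I=A$. Thus condition (2) is equivalent to the assertion that every unital subalgebra $S\subseteq A$ arising as the image of an $A$-point and satisfying $S+I=A$ must already equal $A$.

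For $(1)\Rightarrow(2)$ I would argue directly. Assume $I\subseteq\mathfrak{m}_A^2$ and suppose $S+I=A$ for $S=\mathrm{Im}(p^A)$. Since $I\subseteq\mathfrak{m}_A^2$ this forces $S+\mathfrak{m}_A^2=A$, and intersecting with $\mathfrak{m}_A$ gives $\mathfrak{m}_A=(S\cap\mathfrak{m}_A)+\mathfrak{m}_A^2$; hence the elements of $S\cap\mathfrak{m}_A$ span $\mathfrak{m}_A/\mathfrak{m}_A^2$. Choosing $a_1,\dots,a_m\in S\cap\mathfrak{m}_A$ whose classes form a basis of $\mathfrak{m}_A/\mathfrak{m}_A^2$, Lemma \ref{Lemma4} shows they generate $A$; as they lie in the subalgebra $S$ we conclude $A=\mathbb{R}[a_1,\dots,a_m]\subseteq S$, so $S=A$ and $p^A$ is regular. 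Note that this implication requires no hypothesis on $M$.

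For $(2)\Rightarrow(1)$ I would proceed by contraposition. If $I\nsubseteq\mathfrak{m}_A^2$, Lemma \ref{Lemma5} produces a proper subalgebra $S\subsetneq A$ with $\phi(S)=B$, that is, $S/(S\cap I)\simeq B$. The idea is to realise this $S$ as the image of an $A$-point over $M$: choose a surjective morphism $q\colon C^\infty(M)\to S$ and set $p^A=\iota\circ q$, where $\iota\colon S\hookrightarrow A$ is the inclusion. Then $\mathrm{Im}(p^A)=S\subsetneq A$, so $p^A\notin\check M^A$, whereas $\phi\circ p^A=(\phi|_S)\circ q$ is surjective onto $B$, so $p^A\in\hat\phi^{-1}(\check M^B)$; this contradicts (2). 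I expect the realisation step to be the main obstacle: one must guarantee that such a surjection $q$ exists over the given $M$. This follows from the standard fact that any Weil algebra of width $w$ is the image of a surjective morphism $C^\infty(M)\to S$ whenever $\dim M\geq w$ (sending a system of coordinates to generators of $\mathfrak{m}_S$ via Taylor expansion), together with the bound that $S$ is generated by at most the width of $A$ elements (as in the proof of Lemma \ref{Lemma5}), so its width is at most that of $A$, which in turn is at most $\dim M$ by our standing assumption that $\check M^A$, equivalently $J^AM$, is nonempty.
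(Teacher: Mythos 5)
Your proof is correct and follows essentially the same route as the paper: for $(1)\Rightarrow(2)$ you lift generators of $B$ modulo $\mathfrak m_A^2$ via Lemma \ref{Lemma4} to conclude that the image subalgebra is all of $A$, and for $(2)\Rightarrow(1)$ you use the proper subalgebra $S$ of Lemma \ref{Lemma5} to exhibit a non-regular $A$-point projecting onto a regular $B$-point. The only divergence is the realisation step at the end: the paper simply pulls back a regular $B$-point along the fibre bundle $M^S\to M^B$ induced by the surjection $S\to B$ (Remark \ref{Remark1}), obtaining an $S$-point that is automatically non-regular as an $A$-point, whereas you construct a regular $S$-point explicitly by Taylor expansion, which is why you need the (correct, but avoidable) discussion of widths and $\dim M$.
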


\begin{proof}
  Let us assume $I\subset \mathfrak m_A^2$, and consider $p^A\in M^A$ such that
$\hat\phi(p^A)$ is a regular $B$-point. There are differentiable functions $f_1,\ldots f_m$
in $M$ such that $\{\phi(p^A(f_1)),\ldots \phi(p^A(f_m))\}$ is a system of generators
of $B$. Then their classes modulo $\mathfrak m_B^2$ form a basis 
$\{\overline{\phi(p^A(f_1))},\ldots \overline{\phi(p^A(f_m))}\}$ of $\mathfrak m_B/\mathfrak m_B^2$.
Since $I$ is contained in $\mathfrak m_A^2$ and $\mathfrak m_B = \mathfrak m_A/I$ we have that
$\mathfrak m_B/\mathfrak m_B^2 \simeq \mathfrak m_A/\mathfrak m_A^2$. Then $\{\overline{p^A(f_1)},
\ldots,\overline{p^A(f_m)}\}$ is a basis of $\mathfrak m_A/\mathfrak m_A^2$ and
$\{p^A(f_1)\ldots,p^A(f_m)\}$ is a system of generators of $A$. Thus, $A$ is regular. 

Conversely, let us assume $I\nsubseteq \mathfrak m_A^2$. Consider a subalgebra $S\subset A$ as in
Lemma \ref{Lemma5}. Then $M^S\to M^B$ is a bundle. Let $p^B\in\check M^B$ be a regular $B$-point, and $p^S$
any preimage of $p^B$. Hence, $p^S$ is a $S$-point, and thus a non regular $A$-point, but $\hat\phi(p^S) = p^B$. 
\end{proof}

From now on we will consider the anihilator ideal of $I$, 
$$\Ann(I) = \{a\in A\colon \forall b \in I \,\, ab=0 \}.$$
Let us notice that $I\subseteq \Ann(I)$ if and only if $I^2 = 0$.

\begin{theorem}\label{Theorem7}
The bundle $\check\phi\colon \check M^A \to \check M^B$ is endowed with a
canonical structure of affine bundle (given by the addition law of morphisms and
derivations) if and only if $I\subseteq \mathfrak m_A^2 \cap \Ann(I)$. 
\end{theorem}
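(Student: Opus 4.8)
The plan is to reduce Theorem \ref{Theorem7} to the affine structure on Weil bundles established in Theorem \ref{Theorem3}, combined with the regularity criterion of Lemma \ref{Lemma6}. The condition $I\subseteq \mathfrak m_A^2 \cap \Ann(I)$ splits naturally into two pieces: $I\subseteq\mathfrak m_A^2$ and $I\subseteq\Ann(I)$. By the remark preceding the statement, $I\subseteq\Ann(I)$ is exactly the condition $I^2=0$, which is precisely the hypothesis under which Theorem \ref{Theorem3} produces an affine structure on the full Weil bundle $\hat\phi\colon M^A\to M^B$. The role of the extra condition $I\subseteq\mathfrak m_A^2$ is, via Lemma \ref{Lemma6}, to guarantee that $\check\phi$ is genuinely the restriction of $\hat\phi$ over the regular locus, i.e.\ that $\hat\phi^{-1}(\check M^B)\subseteq \check M^A$, so that the affine structure downstairs restricts to one on $\check M^A\to\check M^B$.

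First I would prove the ``if'' direction. Assume $I\subseteq\mathfrak m_A^2\cap\Ann(I)$. Since $I\subseteq\Ann(I)$ gives $I^2=0$, Theorem \ref{Theorem3} endows $\hat\phi\colon M^A\to M^B$ with an affine structure modelled on the bundle of derivations $C^\infty(M)\to I$. I would then restrict to the regular locus: take $p^B\in\check M^B$ and a fibre preimage $p^A$. By Lemma \ref{Lemma6}, $I\subseteq\mathfrak m_A^2$ ensures $p^A$ is regular, so the entire fibre $\hat\phi^{-1}(p^B)$ lies in $\check M^A$. The addition law $(p^A,D)\mapsto p^A+D$ of Theorem \ref{Theorem3} then preserves $\check M^A$, and restricting it to $\check\phi^{-1}(p^B)$ gives the desired affine structure on $\check M^A\to\check M^B$.

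For the ``only if'' direction, I would show that failure of either inclusion obstructs the affine structure. If $I^2\neq 0$ (that is, $I\nsubseteq\Ann(I)$), then by Lemma \ref{Lemma3} there is a derivation $D\colon C^\infty(M)\to I$ with $(\mathrm{Im}(D))^2\neq 0$, so by Lemma \ref{Lemma1} the sum $p^A+D$ fails to be an $A$-point at all, breaking the addition law. If instead $I\nsubseteq\mathfrak m_A^2$, then by Lemma \ref{Lemma6} there is a regular $B$-point $p^B$ admitting a \emph{non-regular} preimage, so the fibre $\check\phi^{-1}(p^B)$ is a proper subset of $\hat\phi^{-1}(p^B)$; the natural addition law on the ambient fibre does not leave $\check\phi^{-1}(p^B)$ invariant, so no affine structure of the stated form survives on $\check M^A$.

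The main obstacle I anticipate is the second half of the ``only if'' argument: showing precisely why $I\nsubseteq\mathfrak m_A^2$ destroys the affine structure rather than merely shrinking the fibre. One must argue that the vector space of admissible translations, namely derivations into $I$, still acts transitively on the full ambient fibre $\hat\phi^{-1}(p^B)$, so the regular sub-fibre cannot be an affine subspace closed under this action unless it is everything. Making this rigorous requires checking that some translation $D$ with image in $I$ carries a regular point to a non-regular one (or vice versa), which is where the subalgebra $S$ from Lemma \ref{Lemma5} does the work: the non-regular preimages produced there differ from regular points exactly by such admissible derivations, so transitivity forces the affine structure to fail.
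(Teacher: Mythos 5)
Your proposal is correct and takes essentially the same route as the paper's proof: Theorem \ref{Theorem3} combined with Lemma \ref{Lemma6} for the ``if'' direction, and the same two-case obstruction (non-point sums when $I\nsubseteq\Ann(I)$; loss of regularity when $I\nsubseteq\mathfrak m_A^2$) for ``only if''. The transitivity step you flagged as the main obstacle is resolved in the paper exactly as you anticipated: given a non-regular preimage $p^A$ of a regular $p^B$ and any regular $q^A\in\check\phi^{-1}(p^B)$, the difference $D = p^A - q^A$ is an admissible vertical derivation (via Lemma \ref{Lemma2}, since $I^2=0$ holds in this subcase), and translating $q^A$ by $D$ exits $\check M^A$, so the affine structure fails.
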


\begin{proof}
Suppose that $I\subseteq \mathfrak m_A^2\cap \Ann(I)$. Then the addition law of
$A$-points and derivations induces an affine structure on $\hat \phi$. Let $p^B\in \check M^B$
be a regular $B$-point. In view of Lemma \ref{Lemma6} the fibre $\hat\phi^{-1}(p^B)$ consist of
regular points. Thus, $\check\phi^{-1}(p^B) = \hat\phi^{-1}(p^B)$ so that the bundle $\check M^A\to \check M^B$
is the restriction of $M^A\to M^B$ to the open submanifold $\check M^B$; which is an affine bundle.  

On the other hand, let us assume that $I\nsubseteq (\mathfrak m_A^2\cap\Ann(I))$. If $I\nsubseteq \Ann(I)$,
then the addition of an $A$-points and a derivation is not in general an $A$-opint and there is no
affine structure. Finally, let us assume that $I\subseteq \Ann(I)$ but $I\nsubseteq \mathfrak m_A^2$.
Then there is an affine structure on $\hat\phi$. However, by Lemma \ref{Lemma6} there is a non-regular
$A$-point $p^A M$ such that its projection $p^B$ is regular. Let us consider $q^A\in\check\phi^{-1}(p^B)$,
and $D = p^A-q^A\in TV_{q^A}^{\check \phi}\check M^A$. Thus $q^A+D\in \check M^A$, and there is not
affine structure.  
\end{proof}

\begin{corollary}\label{Corollary3}
Let $A$ be of height $l$. 
Then for each $l>k>0$ the natural projection $\check M^A\to \check M^{A_k}$ is an affine
bundle if and only if $2k+1\geq l$.
\end{corollary}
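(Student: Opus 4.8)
The plan is to apply Theorem \ref{Theorem7} to the specific ideal that produces the projection $\check M^A\to\check M^{A_k}$, and then reduce the abstract condition $I\subseteq\mathfrak m_A^2\cap\Ann(I)$ to the arithmetic inequality $2k+1\geq l$. Recall that $A_k = A/\mathfrak m_A^{k+1}$, so the relevant ideal is $I=\mathfrak m_A^{k+1}$ and the projection $\phi\colon A\to A_k$ is the canonical quotient. Since we are told $l>k>0$, Theorem \ref{Theorem7} tells us the bundle is affine precisely when $\mathfrak m_A^{k+1}\subseteq\mathfrak m_A^2\cap\Ann(\mathfrak m_A^{k+1})$, so the whole corollary hinges on translating these two containments into conditions on $k$ and $l$.

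First I would dispose of the $\mathfrak m_A^2$ half. Because $k>0$ we have $k+1\geq 2$, hence $\mathfrak m_A^{k+1}\subseteq\mathfrak m_A^2$ holds automatically; this containment contributes no constraint and is exactly where the hypothesis $k>0$ is used. (For $k=0$ the ideal $\mathfrak m_A$ is not inside $\mathfrak m_A^2$ and the statement would fail, which is why the corollary restricts to $k>0$; this matches the role of Lemma \ref{Lemma6}.) So the entire content reduces to deciding when $\mathfrak m_A^{k+1}\subseteq\Ann(\mathfrak m_A^{k+1})$, and by the remark preceding Theorem \ref{Theorem7} this is equivalent to $(\mathfrak m_A^{k+1})^2=0$, i.e. $\mathfrak m_A^{2k+2}=0$.

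The remaining step is the elementary algebra: $\mathfrak m_A^{2k+2}=0$ if and only if $2k+2\geq l+1$, that is $2k+1\geq l$. Here I use that $A$ has height $l$, which by the definition given in the text means $\mathfrak m_A^{l}\neq 0$ but $\mathfrak m_A^{l+1}=0$. Thus $\mathfrak m_A^{j}=0$ exactly when $j\geq l+1$, and applying this with $j=2k+2$ gives the asserted inequality $2k+1\geq l$. Assembling the two halves: for $l>k>0$ the affine condition of Theorem \ref{Theorem7} holds if and only if $2k+1\geq l$, which is the claim.

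The only genuine obstacle is the bookkeeping around the boundary of the height definition — one must be careful that ``height $l$'' means $\mathfrak m_A^{l+1}=0$ (not $\mathfrak m_A^{l}=0$), since an off-by-one error here would shift the inequality. Everything else is a direct substitution into the already-proved Theorem \ref{Theorem7}, together with the elementary observation that powers of the maximal ideal are nested, so I do not anticipate any substantive difficulty beyond stating the power computation cleanly.
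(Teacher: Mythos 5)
Your proposal is correct and matches the paper's (implicit) argument: the corollary is exactly the ``elementary computation'' deducing from Theorem \ref{Theorem7} with $I=\mathfrak m_A^{k+1}$ that $I\subseteq\mathfrak m_A^2$ is automatic for $k>0$, while $I\subseteq\Ann(I)$ is equivalent to $I^2=\mathfrak m_A^{2k+2}=0$, i.e.\ $2k+1\geq l$ by the height definition. Your care with the off-by-one in the height convention (the paper's definition has an evident typo, meaning $\mathfrak m_A^l\neq 0$, $\mathfrak m_A^{l+1}=0$) and your use of $I\subseteq\Ann(I)\Leftrightarrow I^2=0$ rather than computing $\Ann(I)$ explicitly (which for general $A$, unlike $\mathbb R^l_m$, need not equal $\mathfrak m_A^{l-k}$) are both exactly right.
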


\begin{corollary}\label{Corollary4}
For any $l>k>0$, the natural projection $\check M^l_m \to \check M^k_m$ is an affine
bundle if and only if $2k+1\geq l$.
\end{corollary}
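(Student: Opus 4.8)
The plan is to recognize Corollary~\ref{Corollary4} as the special case of Corollary~\ref{Corollary3} in which $A = \mathbb R^l_m$. First I would check that $\mathbb R^l_m$ is indeed a Weil algebra of height $l$: writing $\mathfrak m$ for the maximal ideal of $\mathbb R[[\xi_1,\ldots,\xi_m]]$, the maximal ideal of $A = \mathbb R^l_m$ is $\mathfrak m_A = \mathfrak m/\mathfrak m^{l+1}$, so that $\mathfrak m_A^j = \mathfrak m^j/\mathfrak m^{l+1}$ is nonzero exactly for $j \le l$ and vanishes for $j \ge l+1$. Next I would identify the quotient $A_k = A/\mathfrak m_A^{k+1}$ with $\mathbb R^k_m$, which is immediate because $\mathfrak m_A^{k+1} = \mathfrak m^{k+1}/\mathfrak m^{l+1}$, whence $A/\mathfrak m_A^{k+1} \simeq \mathbb R[[\xi_1,\ldots,\xi_m]]/\mathfrak m^{k+1} = \mathbb R^k_m$. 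With these two identifications the projection $\check M^l_m \to \check M^k_m$ is exactly the projection $\check M^A \to \check M^{A_k}$, and Corollary~\ref{Corollary3} applies verbatim.

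Since Corollary~\ref{Corollary3} ultimately rests on Theorem~\ref{Theorem7}, it is worth recording the explicit verification of its hypothesis for this algebra, which is where the numerical bound $2k+1 \ge l$ comes from. The kernel of the projection $\phi\colon \mathbb R^l_m \to \mathbb R^k_m$ is $I = \mathfrak m_A^{k+1}$, and Theorem~\ref{Theorem7} requires $I \subseteq \mathfrak m_A^2 \cap \Ann(I)$, so I would check the two conditions separately. The inclusion $I \subseteq \mathfrak m_A^2$ holds automatically under the standing assumption $k > 0$, since then $k+1 \ge 2$ forces $\mathfrak m_A^{k+1} \subseteq \mathfrak m_A^2$. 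The remaining condition $I \subseteq \Ann(I)$ is, by the remark preceding Theorem~\ref{Theorem7}, equivalent to $I^2 = 0$; and here $I^2 = \mathfrak m_A^{2k+2}$, which vanishes if and only if $2k+2 \ge l+1$, that is, if and only if $2k+1 \ge l$.

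Combining the two conditions, the hypothesis of Theorem~\ref{Theorem7} holds precisely when $2k+1 \ge l$, which yields the claimed equivalence. The argument involves no real obstacle: once Theorem~\ref{Theorem7} and the height computation are in place, everything reduces to the elementary fact that $\mathfrak m_A^j$ vanishes exactly for $j > l$. The only point requiring a little care is the bookkeeping on exponents — making sure that $I^2 = \mathfrak m_A^{2(k+1)}$ and that the vanishing threshold is $l+1$ rather than $l$ — so that the inequality comes out as $2k+1 \ge l$ and not an off-by-one variant.
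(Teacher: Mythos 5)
Your proposal is correct and takes essentially the same route as the paper, which states Corollary~\ref{Corollary4} without proof as the specialization of Corollary~\ref{Corollary3} to $A=\mathbb R^l_m$ (itself resting on Theorem~\ref{Theorem7} applied to $I=\mathfrak m_A^{k+1}$). Your explicit bookkeeping --- $I\subseteq\mathfrak m_A^2$ because $k>0$, and $I\subseteq\Ann(I)$ iff $I^2=\mathfrak m_A^{2k+2}=0$ iff $2k+1\geq l$, since $\mathfrak m_A^j$ vanishes exactly for $j\geq l+1$ --- is precisely the elementary computation the paper leaves implicit.
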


\subsection{Affine Structure on the Group of Automorphisms}

Let $\subset A$ be an invariant ideal of the Weil algebra $A$, and $\phi\colon A \to B$
the canonical projection into the quotient algebra. Each automorphism $\sigma\in \Aut(A)$
verifies $\sigma(I)=I$, thus it induces an automorphism $\phi_*(\sigma)\in \Aut(B)$. 

\begin{definition}\label{Definition3}
We will call the affine sequence associated to $I$ the following sequence of 
algebraic groups:
$$K(I) \to \Aut(A) \xrightarrow{\phi_*} \Aut(B), $$
where
$$K(I) = \{\sigma\in\Aut(A)\,\colon\,\sigma(a)-a\in I,\, \sigma(b) = b,\,\forall a\in A\forall b\in I\},$$
is the subgroup of automorphisms of $A$ inducing the identity both in $B$ and $I$. 
\end{definition} 

We will say that the affine sequence is \emph{exact on the left side} in $K(I) = \ker{\phi_*}$.
Analogously, we will say that it is \emph{exact on the right side} if $\phi_*$ is
surjective. Note that if it is exact both on the right and left sides then it is an
exact sequence.

Let us notice that if $I\subseteq \Ann(I)$ then the $A$-module $I$ is also a $B$-module.
By composition we have a canonical inmersion $\Der(B, I)\subseteq \Der(A,I)$ identifiying
derivations from $B$ to $I$ with derivations from $A$ to $I$ which vanish on $I\subset A$. 

\begin{proposition}\label{Proposition2}
Let us asume $I\subseteq \Ann(I)$. The affine sequence associated to $I$ is exact on the
left side if and only if $\Der(B,I) = \Der(A,I)$. 
\end{proposition}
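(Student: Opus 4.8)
The plan is to translate both subgroups $K(I)$ and $\ker\phi_*$ into conditions on derivations, exploiting the hypothesis $I\subseteq\Ann(I)$, i.e. $I^2=0$. First I would unwind the definition of $\phi_*$: an automorphism $\sigma$ lies in $\ker\phi_*$ exactly when $\overline{\sigma(a)}=\bar a$ in $B$ for every $a$, that is, when $\delta:=\sigma-\mathrm{id}$ has image contained in $I$. The crucial observation is that such a $\delta$ is automatically a derivation: expanding $\sigma(ab)=\sigma(a)\sigma(b)$ gives $\delta(ab)=a\,\delta(b)+b\,\delta(a)+\delta(a)\delta(b)$, and the last term lies in $I^2=0$, so Leibniz holds and $\delta\in\Der(A,I)$. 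Conversely, the same computation shows that for any $\delta\in\Der(A,I)$ the endomorphism $\mathrm{id}+\delta$ is an $\mathbb R$-algebra morphism fixing the unit, though not necessarily bijective. Thus $\ker\phi_*$ is exactly the set of automorphisms of the form $\mathrm{id}+\delta$ with $\delta\in\Der(A,I)$.

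Next I would record the parallel description of $K(I)$: by Definition \ref{Definition3}, $\sigma\in K(I)$ iff $\sigma\in\ker\phi_*$ and in addition $\sigma|_I=\mathrm{id}$, i.e. $\delta=\sigma-\mathrm{id}$ vanishes on $I$. Under the identification preceding the statement, a derivation $\delta\in\Der(A,I)$ vanishes on $I$ precisely when it comes from $\Der(B,I)\subseteq\Der(A,I)$. Hence $K(I)$ consists of the automorphisms $\mathrm{id}+\delta$ with $\delta\in\Der(B,I)$, and the always-valid inclusion $K(I)\subseteq\ker\phi_*$ mirrors $\Der(B,I)\subseteq\Der(A,I)$. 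With this dictionary the forward implication is immediate: if $\Der(B,I)=\Der(A,I)$, then for any $\sigma\in\ker\phi_*$ the derivation $\sigma-\mathrm{id}\in\Der(A,I)=\Der(B,I)$ vanishes on $I$, so $\sigma\in K(I)$, giving $K(I)=\ker\phi_*$. This direction uses only one half of the dictionary and causes no trouble, since we begin with a genuine automorphism.

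For the converse the difficulty is exactly the bijectivity gap noted above: given an arbitrary $\delta\in\Der(A,I)$ I cannot conclude directly that $\mathrm{id}+\delta$ is an automorphism, and so I cannot feed it into $\ker\phi_*=K(I)$. Already for $A=\mathbb R[\ep]/(\ep^2)$ with $I=\mathfrak m_A$, the derivation $\ep\mapsto-\ep$ makes $\mathrm{id}+\delta$ singular. I expect this to be the main obstacle, and I would resolve it by scaling. For $t\in\mathbb R$ the map $\sigma_t=\mathrm{id}+t\delta$ is an algebra morphism, and $\det(\sigma_t)$, viewed as a linear endomorphism of the finite-dimensional space $A$, is a polynomial in $t$ with constant term $\det(\mathrm{id})=1$, hence not identically zero. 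Choosing $t\neq0$ away from its finitely many roots makes $\sigma_t$ bijective, so $\sigma_t\in\Aut(A)$, and $\sigma_t-\mathrm{id}=t\delta$ has image in $I$, whence $\sigma_t\in\ker\phi_*=K(I)$. Therefore $\sigma_t$ fixes $I$, i.e. $t\,\delta|_I=0$, and since $t\neq0$ we obtain $\delta|_I=0$, so $\delta\in\Der(B,I)$. This proves $\Der(A,I)\subseteq\Der(B,I)$ and, with the reverse inclusion, the desired equality.
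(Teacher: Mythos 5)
Your proof is correct, and on one of the two implications it takes a genuinely different route from the paper. The direction you prove via the dictionary $\sigma\mapsto\sigma-\mathrm{id}$ (equality $\Der(A,I)=\Der(B,I)$ forces $\ker\phi_*\subseteq K(I)$, with Leibniz rescued by $I^2=0$) is essentially the paper's own argument for that half. But for the implication ``exact on the left $\Rightarrow$ $\Der(A,I)\subseteq\Der(B,I)$'' the paper works infinitesimally: it treats $K(I)$ and $\ker\phi_*$ as algebraic subgroups of $\Aut(A)$, identifies the Lie algebra of $K(I)$ with the derivations $A\to I$ vanishing on $I$ (that is, with $\Der(B,I)$) and the kernel of the linearized morphism with $\Der(A,I)$, and concludes that equality of the groups forces equality of these Lie algebras. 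You instead stay at the group level and manufacture, from an arbitrary $\delta\in\Der(A,I)$, an honest element of $\ker\phi_*$ by scaling: $\mathrm{id}+t\delta$ is always an algebra endomorphism, and since $\det(\mathrm{id}+t\delta)$ is a polynomial in $t$ equal to $1$ at $t=0$, it is invertible for some $t\neq 0$; membership in $K(I)$ then yields $t\,\delta|_I=0$, hence $\delta|_I=0$. Your approach is more elementary and self-contained, avoiding the algebraic-group machinery (the identification of $\mathrm{Lie}(K(I))$ and the fact that passing to Lie algebras preserves kernels of morphisms of algebraic groups in characteristic zero). Moreover, your bijectivity worry is well placed: your counterexample $A=\mathbb R[\varepsilon]/(\varepsilon^2)$, $I=\mathfrak m_A$, $\delta(\varepsilon)=-\varepsilon$ shows that $\mathrm{id}+\delta$ can fail to be an automorphism under the sole hypothesis $I\subseteq\Ann(I)$ --- a gap the paper itself glosses over in the proof of Theorem \ref{Theorem8}, where $Id_A+D$ is asserted to be an automorphism (there the additional hypothesis $I\subseteq\mathfrak m_A^2$ repairs it via Lemma \ref{Lemma4}, but no such hypothesis is available in Proposition \ref{Proposition2}). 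What the paper's Lie-theoretic route buys is conceptual uniformity with the rest of the text, where linearization of the morphism $\Aut(A_{\mathfrak p})\to\Aut(A_{\bar{\mathfrak p}})$ reappears in the Snake Lemma diagram; what yours buys is a direct algebraic argument needing nothing beyond finite dimensionality of $A$.
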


\begin{proof}
  Assuming that the affine sequence is exact on the left side, let us consider the
sequence of Lie algebras induced by the sequence of algebraic groups associated to $I$. 
The Lie algebra of $K(I)$ is, by the definition of $K(I)$, the space of derivations
from $A$ to $I$ which vanish on $I$. Thus, it is identified with the space $\Der(B, I)$. On
the other hand the kernel of the Lie algebra morphism induced by $\phi_*$ is the space
$\Der(A,I)$. If the affine sequence is exact on the left side, then the Lie algebra of
$K(I)$ coincides with this last one, and $\Der(B,I)=\Der(B,A)$. 

Conversely, let us assume that $\Der(A,I) = \Der(B,I)$, \emph{i.e.} all
derivations from $A$ to $I$ vanish on $I$. Let $\sigma$ be an automorphism
of $A$. The difference $Id_A-\sigma$ is a derivation from $A$ to $I$. It vanishes
on $I$, and thus for any $a\in I$ we have $\sigma(a) = a$, and then $\sigma$
induces the identity in $I$, $\sigma\in K(I)$.
\end{proof}

\begin{theorem}\label{Theorem8}
  If $I \subseteq \Ann(A)\cap \mathfrak m_A^2$ and the affine sequence is exact, then
$\phi_*$ is endowed with a natural structure of affine bundle associated with the space
$\Der(A,I)$ with the following addition law:
$$\sigma \oplus D = \sigma + \sigma\circ D.$$
\end{theorem}

\begin{proof}
Let $D$ be a derivation from $A$ to $I$. Then, $Id_A+D$ is an automorphism of $A$. Conversely,
let $\sigma$ be an automorphism of $A$ such that $\phi_*(\sigma) = Id_B$. In such case
$\sigma-Id_A$ is a derivation and it takes values in $I$. We have:
$$\Der(A,I) = \ker(\phi_*).$$
By definition of the addition law we have that $\sigma\oplus D = \sigma(Id+ D)$, so that 
$\sigma\oplus \Der(A,I) = \sigma\circ\ker(\phi_*)$. Finally let us see that the addition
law of the bundle is compatible with the vector space structure of $\Der(A,I)$:
$$(\sigma \oplus D)\oplus D' = \sigma \oplus (D+D').$$
From Proposition \ref{Proposition2} we have that,
$$(\sigma\oplus D\oplus D' = \sigma + \sigma \circ D  (\sigma + \sigma\circ D)\circ D' =
\sigma \oplus(D+D') + \sigma \circ D \circ D'.$$
And because of each derivation vanish on $I$ we have that $D\circ D$ vanish.
\end{proof}

\begin{lemma}\label{Lemma7}
 If $I\subseteq Ann(I)^2$ then the affine sequence associated to $I$ is exact on the
left side. 
\end{lemma}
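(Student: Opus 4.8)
The plan is to reduce the statement to Proposition \ref{Proposition2} and then verify, by a short Leibniz computation, the derivation condition appearing there. First I would observe that the hypothesis $I\subseteq \Ann(I)^2$ is in fact stronger than the hypothesis of Proposition \ref{Proposition2}: since $\Ann(I)$ is an ideal of $A$ we have $\Ann(I)^2\subseteq \Ann(I)$, so $I\subseteq \Ann(I)$, which (by the observation preceding Theorem \ref{Theorem7}) is equivalent to $I^2=0$. In particular $I$ carries a $B$-module structure and the canonical inclusion $\Der(B,I)\subseteq\Der(A,I)$, identifying $\Der(B,I)$ with those derivations $A\to I$ that vanish on $I$, makes sense. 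Hence Proposition \ref{Proposition2} applies, and it suffices to prove the equality $\Der(A,I)=\Der(B,I)$, \emph{id est} that every derivation $D\colon A\to I$ vanishes on $I$.

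To check this, I would take an arbitrary derivation $D\in\Der(A,I)$ and an arbitrary element $x\in I$, and use the hypothesis to write $x$ as a finite sum $x=\sum_i a_i b_i$ with all $a_i,b_i\in\Ann(I)$, which is possible precisely because $I\subseteq\Ann(I)^2$. Applying Leibniz's formula term by term then gives
$$D(x)=\sum_i\bigl(a_i\,D(b_i)+b_i\,D(a_i)\bigr).$$
Here each $D(a_i)$ and $D(b_i)$ lies in $I$, while each $a_i$ and $b_i$ lies in $\Ann(I)$; by the very definition of the annihilator, the products $a_i\,D(b_i)$ and $b_i\,D(a_i)$ all vanish. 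Hence $D(x)=0$, so $D$ vanishes on $I$, giving $\Der(A,I)=\Der(B,I)$. By Proposition \ref{Proposition2} the affine sequence associated to $I$ is then exact on the left side.

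The computation itself is routine; the point to get right is the logical structure, namely that the stronger hypothesis $I\subseteq\Ann(I)^2$ is exactly what lets one represent each element of $I$ as a sum of products of annihilator elements, so that Leibniz's rule kills the derivative. The weaker condition $I^2=0$ alone would only force $D$ to vanish on products of two elements of $I$ — that is, on $I^2$, which is trivial — and would not make $D$ vanish on $I$ itself. So I expect the main (and essentially only) subtlety to be recognizing that it is the square of the annihilator, and not merely the equality $I^2=0$, that drives the argument.
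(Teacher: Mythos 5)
Your proof is correct and takes essentially the same route as the paper's: the identical Leibniz computation on a decomposition $x=\sum_i a_ib_i$ with $a_i,b_i\in\Ann(I)$ shows every derivation $A\to I$ annihilates $I$, so $\Der(A,I)=\Der(B,I)$, and the conclusion follows from Proposition \ref{Proposition2}. Your only addition is the explicit preliminary check that $I\subseteq\Ann(I)^2\subseteq\Ann(I)$, so that the hypothesis of Proposition \ref{Proposition2} is actually satisfied --- a point the paper leaves implicit.
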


\begin{proof}
  Let us consider a derivation $D\colon A \to I$, and $a$ in $I$; thus $a$ is also in
in $\Ann(I)^2$ and the we can write $a = \sum b_kc_k$ for suitable $b_k$ and $c_k$
in $Ann(I)$. We have,
$$D(a) = \sum_k b_kD(c_k) + c_kD(b_k) = 0.$$
Thus, $D$ anhililates $I$. We conclude that $\Der(A,I) = \Der(B,I)$. Our assertion
follows directly from Proposition \ref{Proposition2}
\end{proof}

\begin{corollary}\label{Corollary5}
If the natural numbers $l>r>0$ verify $3r+1\geq 2l$ then the natural projection
$G^l_m\to G^r_m$ is an affine bundle.
\end{corollary}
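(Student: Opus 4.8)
The plan is to realize the projection $G^l_m\to G^r_m$ as the map $\phi_*$ attached to a suitable invariant ideal and then to invoke Theorem \ref{Theorem8}. Concretely I would set $A=\mathbb R^l_m$ and $B=\mathbb R^r_m$, and take $\phi\colon A\to B$ to be the canonical projection $\mathbb R^l_m\to\mathbb R^r_m$. Its kernel is the ideal $I=\mathfrak m_A^{r+1}$, which is a power of the maximal ideal and hence invariant, and $B=A/I$. Under the identifications $G^l_m=\Aut(A)$ and $G^r_m=\Aut(B)$ of Example \ref{Example6}, the natural projection is exactly $\phi_*$. To apply Theorem \ref{Theorem8} I must check two things: the algebraic inclusion $I\subseteq\Ann(I)\cap\mathfrak m_A^2$, and the exactness of the affine sequence associated to $I$.

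For the degree conditions I would argue entirely with powers of $\mathfrak m_A$. Since $r\geq 1$ we have $r+1\geq 2$, so $I=\mathfrak m_A^{r+1}\subseteq\mathfrak m_A^2$ is immediate. For the annihilator I only need the elementary inclusion $\mathfrak m_A^{l-r}\subseteq\Ann(I)$, which holds because $\mathfrak m_A^{l-r}\cdot\mathfrak m_A^{r+1}=\mathfrak m_A^{l+1}=0$ in $A$. Consequently $\Ann(I)^2\supseteq\mathfrak m_A^{2l-2r}$, and the hypothesis $3r+1\geq 2l$ is precisely the inequality $r+1\geq 2l-2r$, i.e. $\mathfrak m_A^{r+1}\subseteq\mathfrak m_A^{2l-2r}$. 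This gives $I\subseteq\Ann(I)^2$. Since $\Ann(I)$ is an ideal, $\Ann(I)^2\subseteq\Ann(I)$, so the same inclusion also yields $I\subseteq\Ann(I)$, equivalently $I^2=0$. Thus $I\subseteq\Ann(I)\cap\mathfrak m_A^2$, and the whole numerical content collapses to the single inequality $3r+1\geq 2l$.

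It remains to establish exactness. Left exactness is free: the inclusion $I\subseteq\Ann(I)^2$ just obtained is exactly the hypothesis of Lemma \ref{Lemma7}, so the affine sequence is exact on the left. For right exactness I must show that $\phi_*\colon\Aut(A)\to\Aut(B)$ is surjective, and this is the one step that is not pure bookkeeping. I would argue by lifting: by Lemma \ref{Lemma4} an automorphism of $B=\mathbb R^r_m$ is determined by the images of the generators $\xi_1,\ldots,\xi_m$, which are polynomials of degree $\leq r$ whose linear parts form an invertible matrix; declaring the very same polynomials (with zero terms in degrees $r+1,\ldots,l$) to be the images of the generators of $A=\mathbb R^l_m$ defines, again by Lemma \ref{Lemma4}, an automorphism of $A$, and $\phi_*$ sends it to the prescribed automorphism of $B$ since the two agree on generators modulo $\mathfrak m_B^{r+1}$. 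Hence $\phi_*$ is surjective and the affine sequence is exact.

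With $I\subseteq\Ann(I)\cap\mathfrak m_A^2$ and the affine sequence exact, Theorem \ref{Theorem8} applies and endows $\phi_*=(G^l_m\to G^r_m)$ with the asserted structure of affine bundle, modelled on $\Der(A,I)$ with the addition law $\sigma\oplus D=\sigma+\sigma\circ D$. I expect the only genuinely non-formal point to be the surjectivity of $\phi_*$ (right exactness); once that is in hand, both the annihilator estimate and Lemma \ref{Lemma7} reduce to the degree inequality $3r+1\geq 2l$, so no further computation is needed.
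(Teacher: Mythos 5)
Your proposal is correct and takes essentially the same route as the paper: set $A=\mathbb R^l_m$, $I=\mathfrak m_A^{r+1}$, reduce the inequality $3r+1\geq 2l$ to the inclusion $I\subseteq\Ann(I)^2$, obtain left exactness from Lemma \ref{Lemma7}, and conclude via Theorem \ref{Theorem8}. The only divergence is that you flesh out what the paper leaves implicit --- the paper merely asserts the surjectivity of $G^l_m\to G^r_m$ and records the equality $\Ann(\mathfrak m_A^{r+1})=\mathfrak m_A^{l-r}$, whereas you give the explicit polynomial lifting argument for right exactness and use only the one-sided inclusion $\mathfrak m_A^{l-r}\subseteq\Ann(I)$, which indeed suffices for the sufficiency-only statement of the corollary.
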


\begin{proof} 
In general $G^l_m\to G^k_m$ is a surjective morphism. We apply the Lemma \ref{Lemma7}
to the case $A=\mathbb R^l_m$, $I=\mathfrak m_A^{k+1}$. Then, $\Ann(I)=\mathfrak m_A^{l-k}$
and $\mathfrak m_A^{k+1}\subseteq \Ann(\mathfrak m_A^{k+1})^2$ if and only if $k+1\geq 2(l-k)$. 
\end{proof}

\subsection{Affine structure on Jet bundles}

Let $I\subset A$ be an invariant ideal with $I\subseteq \Ann(I)\cap \mathfrak m_A^2$ and 
let us denote by $B$ the quotient algebra $A/I$ as above. 
For each $\mathfrak p\in J^AM$ let us denote by $\pi_{\mathfrak p}\colon C^\infty(M)\to A_{\mathfrak p}$ the
canonical projection and $\bar{\mathfrak p} = \phi^J(\mathfrak p)\in J^BM$. Then $A_{\mathfrak p}\simeq \mathfrak p$
and $\bar{\mathfrak p}/\mathfrak p \simeq I$. For each $D\in \Der(C^\infty(M), \bar{\mathfrak p}/\mathfrak p)$ let
us define,
\begin{equation}\label{Equation4}
\mathfrak p + D = \ker(\pi_{\mathfrak p } + D).
\end{equation}
In such case, because $I\subseteq\Ann(I)$ we have that 
$\pi_{\mathfrak p} + D$ is an $A_{\mathfrak p}$-point. It is regular
because $I\subseteq\mathfrak m_A^2$. Hence, $\mathfrak p + D$ is an
$A$-jet. We also have that  $\phi^J(\mathfrak p + D) = \bar{\mathfrak p}$,
because $D$ takes values in $\bar{\mathfrak p}/\mathfrak p$.

\begin{lemma}\label{Lemma8}
Each derivation $D\colon C^\infty(M)\to\bar{\mathfrak p}/\mathfrak p$ which vanishes on $\mathfrak p$
also vanishes on $\bar{\mathfrak p}$ if and only if the affine sequence associated to $I$ is exact
on the left side.
\end{lemma}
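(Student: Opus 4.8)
The plan is to reduce the statement to Proposition \ref{Proposition2}, which already characterizes left-exactness of the affine sequence by the equality $\Der(B,I)=\Der(A,I)$. The bridge is the observation that a derivation $D\colon C^\infty(M)\to\bar{\mathfrak p}/\mathfrak p$ vanishing on $\mathfrak p$ is nothing but the pullback along $\pi_{\mathfrak p}$ of a derivation of the Weil algebra $A_{\mathfrak p}$ into the submodule $\bar{\mathfrak p}/\mathfrak p$.

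First I would use that $\mathfrak p=\ker\pi_{\mathfrak p}$: a derivation $D$ with $D(\mathfrak p)=0$ factors uniquely as $D=\tilde D\circ\pi_{\mathfrak p}$ for a derivation $\tilde D\colon A_{\mathfrak p}\to\bar{\mathfrak p}/\mathfrak p$, and conversely every such $\tilde D$ pulls back to a $D$ vanishing on $\mathfrak p$. Under the algebra isomorphism $A_{\mathfrak p}\simeq A$ and the module identification $\bar{\mathfrak p}/\mathfrak p\simeq I$ (the kernel of $A_{\mathfrak p}\to B_{\bar{\mathfrak p}}$ corresponds to $I=\ker\phi$), this establishes a bijection between the derivations $D$ vanishing on $\mathfrak p$ and the space $\Der(A,I)$. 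Next I would translate the remaining condition: since $\pi_{\mathfrak p}(\bar{\mathfrak p})=\bar{\mathfrak p}/\mathfrak p$ corresponds to $I$, the equality $D(\bar{\mathfrak p})=0$ is equivalent to $\tilde D(I)=0$, that is, to $\tilde D$ lying in $\Der(B,I)$, the subspace of derivations $A\to I$ that vanish on $I$. Hence the hypothesis that every $D$ vanishing on $\mathfrak p$ also vanishes on $\bar{\mathfrak p}$ says exactly that $\Der(A,I)=\Der(B,I)$, the reverse inclusion $\Der(B,I)\subseteq\Der(A,I)$ being automatic. Since $I\subseteq\Ann(I)$ holds by the standing assumption of this subsection, Proposition \ref{Proposition2} applies and gives the asserted equivalence with left-exactness.

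The delicate point, and the one I expect to require the most care, is the compatibility of module structures under these identifications: one must check that the $C^\infty(M)$-module structure on $\bar{\mathfrak p}/\mathfrak p$ induced by $\pi_{\mathfrak p}$ is carried, through $A_{\mathfrak p}\simeq A$, to the $A$-module structure on $I$, so that the Leibniz rule for $D$ transports to the Leibniz rule for $\tilde D$ and the correspondence is genuinely one of derivation spaces rather than of mere $\mathbb R$-linear maps. This is immediate once one records that $\pi_{\mathfrak p}$ is a surjection of algebras with kernel $\mathfrak p$ carrying $\bar{\mathfrak p}$ onto the ideal corresponding to $I$, but it is the verification that makes the reduction rigorous.
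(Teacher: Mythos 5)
Your proof is correct and takes essentially the same route as the paper: factor a derivation annihilating $\mathfrak p$ through a derivation $A_{\mathfrak p}\to\bar{\mathfrak p}/\mathfrak p$, identify $A_{\mathfrak p}\simeq A$ and $\bar{\mathfrak p}/\mathfrak p\simeq I$, and conclude via Proposition \ref{Proposition2} (the paper's citation of Lemma \ref{Lemma2} at this point is evidently a misprint for Proposition \ref{Proposition2}, exactly the reduction you carry out). Your added checks --- bijectivity of the factorization and compatibility of the module structures --- simply make explicit what the paper leaves implicit.
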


\begin{proof} A derivation $C^\infty(M)\to \bar{\mathfrak p}/\mathfrak p$ which anihilates $\mathfrak p$
factorizes through a derivation $A_{\mathfrak p}\to\bar{\mathfrak p}/\mathfrak p$. Then, the claim
es equivalent to Lemma \ref{Lemma2}.
\end{proof} 
  
\begin{theorem}
The addition law \eqref{Equation4} defines an affine structure on the bundle $\phi^J\colon J^AM\to J^BM$ for
any smooth manifold $M$ if and only if the affine squence associated to $I$ is exact.
\end{theorem}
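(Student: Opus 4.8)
The plan is to obtain the affine structure on $\phi^J$ by \emph{descending} the affine structure of the Weil bundle map $\check\phi\colon\check M^A\to\check M^B$ along the principal bundles $\ker\colon\check M^A\to J^AM$ and $\ker\colon\check M^B\to J^BM$ of Theorem \ref{Theorem4}, and to read off the two exactness conditions as exactly the hypotheses under which this descent goes through. Since $I\subseteq\mathfrak m_A^2\cap\Ann(I)$ by the standing assumption, Theorem \ref{Theorem7} makes $\check\phi$ an affine bundle. Fix a $B$-jet $\bar{\mathfrak p}$, an $A$-jet $\mathfrak p$ with $\phi^J(\mathfrak p)=\bar{\mathfrak p}$, a representative $p^A\in\check M^A$ with $\ker p^A=\mathfrak p$, and put $p^B=\check\phi(p^A)$. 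Writing $F=\check\phi^{-1}(p^B)$, this is an affine space over the \emph{fixed} vector space $V=\Der_{p^A}(C^\infty(M),I)$; the $C^\infty(M)$-module structure on $I$ depends only on $p^B$ because $I^2=0$, so $V$ is genuinely constant over $F$. Through the identification $\bar{\mathfrak p}/\mathfrak p\simeq I$ induced by $p^A$ one has $\mathfrak p+D=\ker(p^A+D')$ whenever $D\in\Der(C^\infty(M),\bar{\mathfrak p}/\mathfrak p)$ corresponds to $D'\in V$, so the addition law \eqref{Equation4} is exactly the descent of the $V$-translations on $F$.

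First I would treat right exactness, which controls \emph{transitivity}. Given a second $A$-jet $\mathfrak q$ over $\bar{\mathfrak p}$ with representative $q^A$, we have $\check\phi(q^A)=\tau\circ p^B$ for some $\tau\in\Aut(B)$. If $\phi_*$ is surjective, lift $\tau$ to $\sigma\in\Aut(A)$; then $\sigma^{-1}\circ q^A$ lies in $F$ and still has kernel $\mathfrak q$, so by the affine structure of $\check\phi$ it equals $p^A+D'$ and hence $\mathfrak q=\mathfrak p+D$. Conversely, the $\Aut(B)$-action on $\check M^B$ is free (Theorem \ref{Theorem4}); so if $\tau\notin\mathrm{Im}(\phi_*)$ then no representative of the corresponding jet can be moved into $F$ by an automorphism of $A$, and that jet is never of the form $\mathfrak p+D$. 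Thus the addition law acts transitively on each fibre of $\phi^J$ if and only if the sequence is exact on the right.

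Next comes left exactness, which controls whether \eqref{Equation4} is an \emph{honest action}, equivalently whether the fibrewise stabiliser is constant. The subgroup $\ker\phi_*\subset\Aut(A)$ preserves $F$, and for $\sigma\in\ker\phi_*$ one has $\sigma\circ p^A=p^A+(\sigma-\mathrm{Id})\circ p^A$ with $\sigma-\mathrm{Id}\in\Der(A,I)$ (this makes sense because $I^2=0$), so $\sigma$ acts on $F$ affinely with linear part $D'\mapsto\sigma|_I\circ D'$ and translation part $(\sigma-\mathrm{Id})\circ p^A\in V$. This action is by a \emph{pure translation} exactly when $\sigma|_I=\mathrm{Id}_I$, i.e.\ when $\sigma\in K(I)$; by left exactness $\ker\phi_*=K(I)$, so all of $\ker\phi_*$ translates $F$ through the subspace $T=\{(\sigma-\mathrm{Id})\circ p^A:\sigma\in K(I)\}\simeq\Der(B,I)$, which under $p^A$ corresponds to the derivations $D$ with $D(\bar{\mathfrak p})=0$. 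Assembling: when the sequence is exact the fibre of $\phi^J$ over $\bar{\mathfrak p}$ is $F/\ker\phi_*$ (right exactness and freeness), and since $\ker\phi_*$ translates $F$ through $T\subseteq V$ (left exactness) this quotient is an affine space over $V/T$; a direct check shows the induced law is \eqref{Equation4}, with model fibre $\Der(C^\infty(M),\bar{\mathfrak p}/\mathfrak p)/T$. If instead the sequence fails on the left, then by Proposition \ref{Proposition2} some $\sigma\in\ker\phi_*$ has $\sigma|_I\neq\mathrm{Id}_I$, so it acts on $F$ with nontrivial linear part and $F/\ker\phi_*$ is not affine; equivalently, by Lemma \ref{Lemma8} the stabiliser $\{D:D(\mathfrak p)=0\}$ of $\mathfrak p$ under \eqref{Equation4} strictly contains the translation subspace $T=\{D:D(\bar{\mathfrak p})=0\}$, preventing \eqref{Equation4} from being a simply transitive affine action.

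The main obstacle I expect is the bookkeeping of the descent rather than the algebraic equivalences, which follow quickly from Lemma \ref{Lemma8}, Proposition \ref{Proposition2} and the freeness of the principal actions. Concretely, one must check that the structure on $F/\ker\phi_*$ is independent of the choices of $p^A$ and of the lift $\sigma$, that it varies smoothly (so that one really gets an affine \emph{bundle} over $J^BM$, not merely fibrewise affine spaces), and that it coincides with \eqref{Equation4} including the additivity $(\mathfrak p+D)+D'=\mathfrak p+(D+D')$. This last point is precisely where the constancy of $V$ over $F$ and the identity $\sigma|_I=\mathrm{Id}_I$ for $\sigma\in K(I)$ are used, and is the step most easily gotten wrong if one tries to argue intrinsically with the a priori $\mathfrak q$-dependent spaces $\bar{\mathfrak p}/\mathfrak q$ instead of descending from the single fixed space $V$.
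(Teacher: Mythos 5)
Your proposal is correct in outline, but it takes a genuinely different route from the paper's. The paper does not descend the affine structure of Theorem \ref{Theorem7} through the principal bundles of Theorem \ref{Theorem4}; instead it reduces the statement to four fibrewise conditions and matches each to an exactness hypothesis: (1) well-definedness of $\mathfrak p+[D]$ on tangent classes, equivalent to left exactness via Lemma \ref{Lemma8}; (2) surjectivity of $\Der(C^\infty(M),\bar{\mathfrak p}/\mathfrak p)\to TV^{\phi^J}_{\mathfrak p}(J^AM)$, deduced from right exactness by a Snake Lemma chase on a $3\times 3$ diagram of derivation spaces; (3) simple transitivity, proved equivalent to right exactness by the same automorphism-lifting argument you use; and (4) canonical isomorphisms $TV^{\phi^J}_{\mathfrak q}(J^AM)\simeq TV^{\phi^J}_{\mathfrak p}(J^AM)$ along a fibre. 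Your quotient picture buys two real simplifications: the Snake Lemma disappears, since the model space $V/T$ is constructed directly from the translation analysis $\sigma\circ(p^A+D')=p^A+(\sigma-\mathrm{Id})\circ p^A+\sigma|_I\circ D'$, which exhibits $K(I)$ as exactly the pure translations; and the paper's condition (4) becomes automatic because all fibres are referred to the single fixed space $V$ --- precisely the pitfall with the $\mathfrak q$-dependent spaces $\bar{\mathfrak p}/\mathfrak q$ that you flag at the end. What the paper's route buys is the explicit identification of the model space with the vertical tangent space; to recover the theorem as stated (an affine bundle modelled on $TV^{\phi^J}(J^BM)$) you should record that under left exactness $T=\{D\colon D(\bar{\mathfrak p})=0\}=\{D\colon D(\mathfrak p)=0\}$ by Lemma \ref{Lemma8}, so that $V/T\simeq TV^{\phi^J}_{\mathfrak p}(J^AM)$ by Theorem \ref{Theorem6}. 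One clause deserves tightening: asserting that $\sigma|_I\neq\mathrm{Id}_I$ forces a ``nontrivial linear part'' on $F$ tacitly assumes the derivations in $V$ detect $\sigma|_I$, which is not immediate in general; your parallel argument via Lemma \ref{Lemma8} and Proposition \ref{Proposition2} --- the stabiliser $\{D\colon D(\mathfrak p)=0\}$ of $\mathfrak p$ strictly contains $T$, so \eqref{Equation4} cannot descend to a simply transitive action of a fixed vector space --- is the safe one, is exactly the mechanism behind the paper's condition (1), and should carry the burden of that direction.
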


\begin{proof}
A derivation $D\colon C^\infty(M)\colon\bar{\mathfrak p}/\mathfrak p$ defines a tangent vector
$[D]\in T_{\mathfrak p}(J^AM)$ as it is shown in Theorem \ref{Theorem6}. 
Moreover, we have that $[D]\in TV_{\mathfrak p}^{\phi^J}J^AM$, because $D$ takes values in $\bar{\mathfrak p}/\mathfrak p$. Let us prove that the following conditions, which are equivalent to the assertion of the theorem, 
hold if and only if the affine sequence associated to $I$ is exact. 
\begin{enumerate}
\item If two derivations $D$ and $D'$ from $C^\infty(M)$ to $A_{\mathfrak p}$ define the same
tangent vector at $\mathfrak p$ then $\mathfrak p+D = \mathfrak p+ D'$. 
\item The natural projection $\Der(C^\infty(M),\bar{\mathfrak p}/\mathfrak p)\to TV^{\phi^J}_{\mathfrak p}(J^AM)$
is surjective.
\item For each $\mathfrak q \subset \bar{\mathfrak p}$ there is a unique $[D]\in TV_{\mathfrak p}^{\phi^J}(J^AM)$
such that $\mathfrak p + [D] = \mathfrak q$. 
\item For each $A$-jet $\mathfrak q$ contained in the $B$-jet $\bar{\mathfrak p}$ there is a canonical
isomorphism $TV^{\phi^J}_{\mathfrak p}(J^AM) \simeq TV^{\phi^J}_{\mathfrak q}(J^AM)$.
\end{enumerate}

\medskip

\emph{Condition} (1.) \emph{holds if and only if the affine sequence is exact on the left side}.

\medskip

Let $D$ and $D'$ define the same tangent vector $[D]\in TV_{\mathfrak p}^{\phi^J}(J^AM)$. In such case
the difference $\delta = D - D'$ vanishes on $\mathfrak p$. By Lemma \ref{Lemma8}, each derivation
vanishing on $\mathfrak p$ also vanish on $\bar{\mathfrak p}$ if and only if the affine sequence associated
to $I$ is exact on the left side. In the case of exact affine sequence we have:
$$\ker(\pi_{\mathfrak p} + D) = \ker(\pi_{\mathfrak p} + D').$$

\medskip

\emph{If the affine sequence is exact on the right side then condition} (2.) \emph{holds.}

\medskip

It is an application of the classical \emph{Snake Lemma}. We have a natural diagram of
exact columns and arrows:
$$\xymatrix{ 
& 0 \ar[d] & 0 \ar[d] & 0 \ar[d] \\
0 \ar[r] & \Der(A_{\mathfrak p},\bar{\mathfrak p}/\mathfrak p) \ar[r]\ar[d] & 
    \Der(A_{\mathfrak p},A_{\mathfrak p})  \ar[r]^{\psi} \ar[d] & 
    \Der(A_{\bar{\mathfrak p}},A_{\bar{\mathfrak p}}) \ar[d] \\
0 \ar[r] & \Der(C^\infty(M),\bar{\mathfrak p}/\mathfrak p) \ar[r] \ar[d]^-{\bar\psi} & 
    \Der(C^\infty(M),A_{\mathfrak p}) \ar[r]\ar[d] & 
    \Der(C^\infty(M),A_{\bar{\mathfrak p}})\ar[r]\ar[d] & 0  \\
0 \ar[r] & TV_{\mathfrak p}^{\phi^J}(J^AM) \ar[r] & T_{\mathfrak p}(J^AM)\ar[r]\ar[d] & 
    T_{\bar{\mathfrak p}}(J^BM)  \ar[r] \ar[d] & 0 \\
& & 0 & 0
}$$

According to the Snake Lemma, if $\mathrm{coker}(\psi)$ vanishes then we have an exact sequence
$$\ldots \to 0 \to \mathrm{coker}(\bar\psi) \to 0 \to \ldots $$
and vice-versa. Hence, $\mathrm{coker}(\psi)$ vanish if and only if $\mathrm{coker}(\bar\psi)$ vanishes. 
Note that the natural mapping $\psi$ is the linearization of the algebraic group morphism 
$\Aut(A_{\mathfrak p}) \to \Aut(A_{\bar{\mathfrak p}})$. Since $A_{\mathfrak p} \simeq A$ and 
$A_{\bar{\mathfrak p}}\simeq B$ we conclude that if the affine sequence associated to $I$ is
exact on the right side then (2.) holds.

\medskip

\emph{Condition (3.) holds if and only if the affine sequence associated to $I$ is exact on the right side}

\medskip

Let us consider any other  $A$-jet $\mathfrak q\subset \bar{\mathfrak p}$ and an isomorphism
$\tau\colon A_{\mathfrak q}\to A_{\mathfrak p}$. Thus, we have diagram (not commutative):
$$\xymatrix{C^\infty(M) \ar[r]^-{\pi_{\mathfrak q}}\ar[rd]_-{\pi_{\mathfrak q}} & 
 A_{\mathfrak p} \ar[rd]^-{\bar\pi_{\mathfrak p}} \\
& A_{\mathfrak q} \ar[r]_{\bar\pi_{\mathfrak q}} \ar[u]^-{\tau} & A_{\bar{\mathfrak p}} }$$ 
 
Let us prove the following assertion: 
for each $\mathfrak q$ as above we can find an isomorphism $\tau$ such that 
$\bar\pi_{\mathfrak p}\circ \tau = \bar\pi_{\mathfrak q}$ if and only if the affine
sequence associated to $I$ is exact on the right side. First, let us assume that
the affine sequence is exact on the right side. Let us consider $p^A$ and $q^A$
two $A$-jet representing $\mathfrak p$ and $\mathfrak q$ respectively. The $B$-points,
$\check\phi(p^A)$ and $\check\phi(q^A)$ represent the same $B$-jet $\bar{\mathfrak p}$. Then, 
$\check\phi(p^A)$ and $\check\phi(q^A)$ are related by an automorphism of $B$, $\tau_1$. If the 
affine sequence associated to $I$ is exact on the right side, then $\Aut(B)$ is a quotient of
$\Aut(A)$. Hence, $\tau_1$ lifts to an automorphism $\tau_2$ of $A$. This automorphism
induces the isomorphism $\tau$ when we replace $A$ for $A_{\mathfrak p}$ and $A_{\mathfrak q}$.
Conversely, if the affine sequence is not exact on the right side, we can choose $\mathfrak q$ and
$A$-points $p^A$ and $q^A$ such that $\check\phi(p^A)$ and $\check\phi(p^B)$ are related by an automorphism
which can not be lifted to $A$. In such case, we can not find such isomorphism $\tau$.

Now, let us consider that the affine sequence is exact on the right side and let $\tau$ be as above.
Then $\pi_{\mathfrak p}$ and $\tau\circ\pi_{\mathfrak p}$ are regular $A_{\mathfrak p}$-points
that are projected onto the same $A_{\bar{\mathfrak p}}$-point. Then, $D = \pi_{\mathfrak p} - \tau\circ\pi_{\tau_q}$
is a derivation of $C^\infty(M)$ and it take values in $\bar{\mathfrak p}/\mathfrak p$. It defines
a vertical vector $[D]\in TV_{\mathfrak p}^{\phi^J}(J^AM)$ and it follows that:
$$\mathfrak p + [D] = \mathfrak q.$$

\medskip

\emph{If the affine sequence is exact then condition} (4.) \emph{holds}

\medskip

If the affine sequence is exact, we can find $\tau$ and $\bar\tau\colon A_{\mathfrak q}\to A_{\mathfrak p}$
as above. Then, $\sigma = \tau\circ\bar\tau$ is an automorphism of $A_{\mathfrak p}$ which induces the 
identity on $A_{\bar{\mathfrak p}}$. Since the affine sequence is exact on the right side we have
that $\sigma$ induces the identity map on $\bar{\mathfrak p}/\mathfrak p$. it follows that 
the restriction of $\tau$ to the space $\bar{\mathfrak p}/\mathfrak q$ is canonical and does not
depend on $\tau$. This canonical identification 
$\tau\colon \bar{\mathfrak p}/\mathfrak q \to \bar{\mathfrak p}/\mathfrak p$ induces canonical isomorphisms:
$$\xymatrix{\Der(A_{\bar{\mathfrak p}},\bar{\mathfrak p}/\mathfrak q) \ar[r]\ar[d] & 
 \Der(C^\infty(M),\bar{\mathfrak p}/\mathfrak q) \ar[r]\ar[d] & TV_{\mathfrak q}^{\phi^J}(J^AM)\ar[d]^-{\tau^*} \\ 
 \Der(A_{\bar{\mathfrak p}},\bar{\mathfrak p}/\mathfrak p) \ar[r] & 
 \Der(C^\infty(M),\bar{\mathfrak p}/\mathfrak p)  \ar[r] & TV_{\mathfrak p}^{\phi^J}(J^AM)
}$$ 
Thus, condition (4.) is satisfied. 

\medskip

If the affine sequence is exact then the vector space $TV_{\mathfrak p}^{\phi^J}(J^AM)$ depends
only on the base $B$-jet $\bar p$. Those spaces define a vector bundle $TV^{\phi^J}(J^BM)\to J^BM$
and the composition law:
$$J^AM \times_{J^BM}TV^{\phi^J}(J^BM) \to J^AM, \quad (\mathfrak p + [D])\mapsto \mathfrak p + [D].$$
is an affine structure on the bundle $\phi^J$.
\end{proof}

\begin{corollary}\label{Corollary6}
Let $A_l$ be of height $l$, and $l>k>0$. The natural projection $J^{A_l}M \to J^{A_k}M$ is endowed with
a canonical structure of affine bundle if and only if $3k+1\geq 2l$ and $\Aut(A_l)\to\Aut(A_k)$ is
surjective.
\end{corollary}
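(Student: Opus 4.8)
The plan is to read the projection $J^{A_l}M\to J^{A_k}M$ as the map $\phi^J$ attached to the canonical surjection $\phi\colon A_l\to A_k=A_l/\mathfrak m_{A_l}^{k+1}$, whose kernel is the ideal $I=\mathfrak m_{A_l}^{k+1}$. This ideal is a power of the maximal ideal, hence invariant, so the machinery of the preceding theorem applies. Before invoking it I would verify its standing hypothesis $I\subseteq\Ann(I)\cap\mathfrak m_{A_l}^2$: since $k>0$ we have $k+1\geq 2$, giving $I\subseteq\mathfrak m_{A_l}^2$, while $I\subseteq\Ann(I)$ is equivalent to $I^2=\mathfrak m_{A_l}^{2k+2}=0$, i.e.\ to $2k+1\geq l$.

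By the preceding theorem the addition law \eqref{Equation4} endows $\phi^J$ with an affine structure for every $M$ exactly when the affine sequence associated to $I$ is exact, that is, exact on both sides. Exactness on the right is, by Definition \ref{Definition3} and the remark following it, precisely the surjectivity of $\phi_*\colon\Aut(A_l)\to\Aut(A_k)$, which is the second hypothesis of the corollary; so that half needs no further work. It remains to match exactness on the left with the inequality $3k+1\geq 2l$. By Proposition \ref{Proposition2} (available because $I\subseteq\Ann(I)$) left exactness is equivalent to $\Der(A_k,I)=\Der(A_l,I)$, i.e.\ to the vanishing on $I$ of every derivation $A_l\to I$.

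For the \emph{if} direction I would argue as in Corollary \ref{Corollary5}. Since $\mathfrak m_{A_l}^{l-k}\cdot\mathfrak m_{A_l}^{k+1}\subseteq\mathfrak m_{A_l}^{l+1}=0$ we always have $\mathfrak m_{A_l}^{l-k}\subseteq\Ann(I)$, whence $\Ann(I)^2\supseteq\mathfrak m_{A_l}^{2(l-k)}$; therefore $I=\mathfrak m_{A_l}^{k+1}\subseteq\Ann(I)^2$ as soon as $k+1\geq 2(l-k)$, that is, $3k+1\geq 2l$. Lemma \ref{Lemma7} then yields exactness on the left, and together with surjectivity of $\phi_*$ the affine sequence becomes exact, so the preceding theorem produces the affine structure. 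One checks along the way that $3k+1\geq 2l$ forces $2k+1\geq l$, so the standing hypothesis verified above indeed holds.

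The main obstacle is the converse. When $2k+1<l$ one has $I^2\neq 0$, so by Lemmas \ref{Lemma1} and \ref{Lemma3} there is a derivation into $\bar{\mathfrak p}/\mathfrak p\simeq I$ of nonzero square; then $\mathfrak p+D$ escapes $J^{A_l}M$ and the addition law is not even defined, so no structure can exist. The genuinely delicate regime is $l\leq 2k+1$, where the standing hypothesis holds and the question is purely whether left exactness fails. Here a direct count seems unavoidable: for $a\in\mathfrak m_{A_l}^{k+1}$ and $D\colon A_l\to I$ one has $D(a)\in\mathfrak m_{A_l}^{k}\cdot\mathfrak m_{A_l}^{k+1}=\mathfrak m_{A_l}^{2k+1}$, so every such $D$ annihilates $I$ whenever $\mathfrak m_{A_l}^{2k+1}=0$, i.e.\ whenever $2k\geq l$; and in the model case $A_l=\mathbb R^l_m$ the candidate $\xi_i\mapsto\xi_i^{k+1}$ sends $\xi_i^{k+1}$ to $(k+1)\xi_i^{2k+1}$, which is nonzero only when $l=2k+1$. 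I therefore expect the sharp hypothesis for left exactness to be $2k\geq l$ rather than $3k+1\geq 2l$, the latter being only the sufficient condition furnished by Lemma \ref{Lemma7}; reconciling the threshold produced by these explicit derivations with the claimed bound is the decisive step, and the point I would revisit most carefully before asserting the \emph{only if} direction.
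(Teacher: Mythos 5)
Your reduction and your \emph{if} direction coincide with the paper's intended derivation. The paper states Corollary \ref{Corollary6} without a separate proof; it is meant to follow exactly as you argue: $I=\mathfrak m_{A_l}^{k+1}$ is invariant, the standing hypothesis $I\subseteq\Ann(I)\cap\mathfrak m_{A_l}^2$ amounts to $k>0$ and $2k+1\geq l$, right-exactness of the affine sequence \emph{is} surjectivity of $\phi_*\colon\Aut(A_l)\to\Aut(A_k)$ by Definition \ref{Definition3}, and left-exactness is extracted from Lemma \ref{Lemma7} via the computation of Corollary \ref{Corollary5}: $\mathfrak m_{A_l}^{l-k}\subseteq\Ann(I)$ always, so $k+1\geq 2(l-k)$, i.e. $3k+1\geq 2l$, gives $I\subseteq\Ann(I)^2$. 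Your verification that $3k+1\geq 2l$ forces $2k+1\geq l$, and your disposal of the regime $l>2k+1$ (where $I^2\neq 0$ and the law \eqref{Equation4} is not even defined), are both correct. Up to this point you have reproduced everything the paper actually establishes.

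Your hesitation about the \emph{only if} direction is not a gap in your argument but a correct diagnosis of a gap in the statement: the paper offers no necessity argument, Lemma \ref{Lemma7} is only a sufficient criterion, and your computation shows it is not sharp. Indeed, for any $A_l$ of height $l$, a derivation $D\colon A_l\to I$ sends a product of $k+1$ elements of $\mathfrak m_{A_l}$ into $\mathfrak m_{A_l}^{k}\cdot\mathfrak m_{A_l}^{k+1}=\mathfrak m_{A_l}^{2k+1}$, so by Proposition \ref{Proposition2} the sequence is left-exact whenever $2k\geq l$. Take $(l,k)=(4,2)$ and $A_l=\mathbb R^4_m$: left-exactness holds since $\mathfrak m^2\cdot\mathfrak m^3=\mathfrak m^5=0$, and $\phi_*\colon G^4_m\to G^2_m$ is surjective (lift a polynomial representative; an endomorphism inducing an isomorphism on $\mathfrak m/\mathfrak m^2$ is an automorphism). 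The affine sequence is therefore exact, and the paper's own unnumbered theorem then equips $J^4_mM\to J^2_mM$ with the affine structure, although $3k+1=7<8=2l$ — so the corollary's \emph{only if} contradicts the very theorem it rests on. In the model case the sharp arithmetic threshold is $l\leq 2k$, with failure at $l=2k+1$ witnessed precisely by your derivation $\xi_1\mapsto\xi_1^{k+1}$, since $(k+1)\xi_1^{2k+1}\neq 0$ there; for a general $A_l$ even the case $l=2k+1$ is algebra-dependent, so a correct statement in this generality must be phrased as exactness of the affine sequence rather than as an inequality. In short: your proposal proves all that is provable here, and the point you flagged for revisiting should be upgraded from a doubt to a definite objection to the stated bound.
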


\begin{corollary}
The natural projection $J^l_mM\to J^r_mM$ fir $l>r>0$ is endowed with a canonical structure of
affine bundle if and only if if $3r+1\geq 2l$.
\end{corollary}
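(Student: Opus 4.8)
The plan is to deduce this statement directly from Corollary \ref{Corollary6} by specializing the generic height-$l$ algebra to $A=\mathbb R^l_m$. First I would record the identification that makes the specialization legitimate. For $A=\mathbb R^l_m=\mathbb R[[\xi_1,\ldots,\xi_m]]/\mathfrak m^{l+1}$ the maximal ideal is $\mathfrak m_A=\mathfrak m/\mathfrak m^{l+1}$, so $\mathfrak m_A^{r+1}=\mathfrak m^{r+1}/\mathfrak m^{l+1}$ and hence $A_r=A/\mathfrak m_A^{r+1}=\mathbb R[[\xi_1,\ldots,\xi_m]]/\mathfrak m^{r+1}=\mathbb R^r_m$. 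Taking the invariant ideal $I=\mathfrak m_A^{r+1}$ (powers of the maximal ideal are invariant, as noted before Theorem \ref{Theorem5}), the associated bundle $\phi^J\colon J^{A}M\to J^{A_r}M$ is exactly the natural projection $J^l_mM\to J^r_mM$, and the induced group morphism $\Aut(A)\to\Aut(A_r)$ is the jet group projection $G^l_m\to G^r_m$ of Example \ref{Example6}.

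With this dictionary in place, Corollary \ref{Corollary6} asserts that the affine structure on $J^l_mM\to J^r_mM$ exists if and only if $3r+1\geq 2l$ \emph{and} the jet group projection $G^l_m\to G^r_m$ is surjective. So the only thing left to establish is that $G^l_m\to G^r_m$ is \emph{always} surjective for $l>r>0$; once this is shown, the numerical condition $3r+1\geq 2l$ alone characterizes the existence of the affine bundle, which is exactly the claim.

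To prove surjectivity I would use the concrete description of $\Aut(\mathbb R^l_m)$ together with Lemma \ref{Lemma4}. An algebra endomorphism of $\mathbb R^l_m$ is determined by the images of the generators $\xi_1,\ldots,\xi_m$ in $\mathfrak m_{\mathbb R^l_m}$, and by Lemma \ref{Lemma4} it is an automorphism precisely when the classes of these images modulo $\mathfrak m_A^2$ form a basis of $\mathfrak m_A/\mathfrak m_A^2$ (a surjective endomorphism of the finite-dimensional algebra is automatically bijective). Given $\bar\sigma\in G^r_m$ with $\bar\sigma(\xi_i)=\bar P_i$, I would represent each $\bar P_i$ by a polynomial of degree $\leq r$ without constant term and regard this same polynomial as an element $P_i\in\mathbb R^l_m$ (valid since $r\leq l$). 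Since $r\geq 1$, the linear part of $P_i$ coincides with that of $\bar P_i$, which is invertible because $\bar\sigma$ is an automorphism of $\mathbb R^r_m$; hence $\xi_i\mapsto P_i$ defines an automorphism $\sigma\in G^l_m$ whose reduction modulo $\mathfrak m^{r+1}$ is $\bar\sigma$. Thus $G^l_m\to G^r_m$ is onto.

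Assembling the pieces, surjectivity of $G^l_m\to G^r_m$ removes the second hypothesis of Corollary \ref{Corollary6}, leaving $3r+1\geq 2l$ as the sole criterion. The only genuinely substantive step is the surjectivity of the jet group projection, and I expect it to be the main obstacle, although it is a mild one: it reduces to the observations that truncation does not affect the linear part of an automorphism and that, by Lemma \ref{Lemma4}, invertibility of that linear part is the only constraint for an endomorphism of $\mathbb R^l_m$ to be an automorphism.
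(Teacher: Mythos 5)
Your proposal is correct and follows essentially the same route as the paper: the paper obtains this corollary by specializing Corollary \ref{Corollary6} to $A=\mathbb R^l_m$ (so $A_r=\mathbb R^r_m$), where the surjectivity hypothesis is automatic because, as asserted in the proof of Corollary \ref{Corollary5}, $G^l_m\to G^r_m$ is always surjective. Your only addition is an explicit proof of that surjectivity via Lemma \ref{Lemma4} (lifting the images of the $\xi_i$ and checking the linear part), a detail the paper states without proof; your argument for it is sound.
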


\begin{remark}
Those results extend the well known affine structure of the spaces of jets of sections. First,
they show that this structure arises not only for the projection by lower order one-by-one, but if follows
an arithmetic formula which is also different from the expected one of \emph{duplicating orders}. Second,
this affine structure is inherent to the spaces $J^l_mM$ as spaces of ideals, it does not depend
on their realization as spaces of sections of fibre bundles.
\end{remark}

\bigskip

{\sc\noindent David Bl\'azquez-Sanz \\
Escuela de Matem\'aticas\\
Universidad Sergio Arboleda\\
Calle 74, no. 14-14 \\
Bogot\'a, Colombia\\
}
E-mail: {\tt david.blazquez-sanz@usa.edu.co}

\end{document}